\theoremstyle{plain}
\newtheorem{thm}{Theorem}[section]
\newtheorem{thm*}{Theorem}
\newtheorem{prop}[thm]{Proposition}
\theoremstyle{definition}
\newtheorem{dfn}[thm]{Definition}
\newtheorem{rmk}[thm]{Remark}
\newcommand{\R}{\mathbb{R}}
\newcommand{\s}{\mathcal{S}}
\newcommand{\abs}[1]{\lvert #1 \rvert}
\newcommand{\defeq}{\coloneqq}
\title{Bifurcations of planar balanced configurations for the $n$-body problem in $\R^4$}
\author{
Katharina Kormann and Giorgia Testolina\\[3pt]
\textit{Ruhr-Universität Bochum, Fakultät für Mathematik}
}
\date{\today}
\begin{document}

\maketitle

\begin{abstract}
\small
Central configurations play a fundamental role in the Newtonian $n$-body problem, as they give rise to motions in which the configuration evolves while preserving its shape up to rotation and scaling. These include relative equilibria, where the configuration rigidly rotates about the center of mass and each body moves along a circular orbit. For $d\le3$, such motions originate only from planar central configurations, whereas in higher dimensions the richer structure of the orthogonal group admits new balanced configurations that can produce non-planar relative equilibria.
Building on the framework introduced in~\cite{APF}, we analyze bifurcations of planar balanced configurations in $\mathbb{R}^4$. We extend a classical variational result, which guarantees the existence of bifurcation points along trivial branches of critical points that are degenerate only at finitely many points, to the case where the trivial branch remains degenerate throughout. Applying this extension, we establish the existence of bifurcation points along the planar balanced configuration branch and derive a lower bound on their number.
\end{abstract}

\paragraph{Acknowledgements} We would like to thank Luca Asselle for his support and help in revising this work and Alessandro Portaluri for the insightful discussions that contributed to the development of this work. The second author is partially funded by the CRC/TRR 191 ``Symplectic structures in Geometry, Algebra and Dynamics'' and by the DFG project 566804407 ``Symplectic Dynamics, Celestial Mechanics and Magnetism''.

\section{Introduction}
\label{section_1}

The $n$-body problem is a cornerstone of celestial mechanics, which studies the motion of $n$ point masses under their mutual gravitational attraction. 
Let $m_1, \dots, m_n > 0$ be $n$ point masses with positions $q_1, \dots, q_n \in \R^d$, $d \geq 2$.
Introducing the configuration vector $q \defeq (q_1, \dots, q_n) \in \R^{nd}$, the diagonal mass matrix $M \defeq \text{diag}(m_1 \mathbb{I}_d, \dots, m_n \mathbb{I}_d)$, and the Newtonian gravitational potential function

\[
U(q) \defeq \sum_{i < j} \frac{m_i m_j}{\abs{q_i - q_j}},
\]
Newton's equations of motion are given by

\begin{equation}
\label{eqn_motion}
\ddot{q} = M^{-1} \nabla U(q).
\end{equation}
System~\eqref{eqn_motion} is invariant under the symmetries of the Euclidean group, which, by Noether’s theorem, correspond to conserved quantities. In particular, invariance under translations implies conservation of total linear momentum. As a consequence, the center of mass

\[
\bar{q} = \frac{1}{\bar{m}} \sum_{i=1}^n m_i q_i, \quad \bar{m} = \sum_{i=1}^n m_i,
\]
moves with constant velocity and can therefore be fixed at the origin without loss of generality.
Under this condition, and excluding collisions, the collision-free configuration space is defined by

\[
\mathcal {C} \defeq \{ q \in \R^{dn} \mid \bar{q} = 0, \, q_i \neq q_j \text{ for } i \neq j \}.
\]
For $n \geq 3$, system~\eqref{eqn_motion} is extremely difficult to solve, and in fact a complete explicit solution is known only in the case $n = 2$. A classical approach --- pioneered by Euler and Lagrange and further developed by Albouy, Chenciner, Conley, Moeckel, Montgomery, and Smale, among others --- is therefore to look for special configurations of the masses that give rise to simple types of motion. Among these, \emph{central} and \emph{balanced configurations}, play a fundamental role~\cite{Albouy_Chen, AP, Moeckel1994, Moeckel2014}.

A central configuration is a special arrangement of the masses such that the acceleration vector of each body points towards the center of mass, with magnitude proportional to its distance from the center of mass. Mathematically, this condition can be expressed as

\begin{equation}
\label{eqn:CC}
M^{-1} \nabla U(q) + \lambda q = 0,
\end{equation}
for some positive constant $\lambda$.
The study of central configurations dates back to Euler (1767) and Lagrange (1772), who classified all central configurations for the three-body problem. For any choice of masses, there are three classes of collinear central configurations, corresponding to different orderings of the masses along the line, and two planar configurations, one for each orientation, where the masses form the vertices of an equilateral triangle. For more information about central configurations, we refer to the lectures by Moeckel~\cite{Moeckel1994, Moeckel2014}.

Central configurations give rise, in dimensions $d \leq 3$, to particularly simple solutions of the $n$-body problem:
\begin{itemize}
\item Every central configuration yields a \emph{homothetic solution}, in which the configuration maintains its shape while receding from or collapsing toward the center of mass. These homothetic motions also describe the qualitative behaviour near total collisions: orbits approaching or receding from a total collapse are asymptotic to such solutions.
\item Planar central configurations lead to \emph{homographic solutions}, where each body follows a Keplerian orbit while the shape of the configuration is preserved up to scaling and rotation.
\item A special case of homographic motion is the \emph{relative equilibrium}, in which the bodies move on circular orbits and the configuration rotates rigidly about the center of mass with constant angular velocity.
\end{itemize}

The situation changes fundamentally when $d \geq 4$, as the richer structure of the orthogonal group allows for new types of balanced configurations~\cite{Albouy_Chen}. This structure permits, for instance, simultaneous rotations in two mutually orthogonal planes with distinct angular velocities, giving rise to new ways of balancing gravitational and centrifugal forces. As a consequence, relative equilibria are no longer restricted to planar configurations, and the motion does not necessarily remain planar.

In this article we focus on balanced configurations in $\R^4$. For a detailed discussion of balanced configurations, we refer the reader to~\cite{AP, Moeckel2014}. Let $s > 1$ and define the matrices
\[
S \defeq \mathrm{diag}(s, s, 1, 1), \qquad \hat{S} \defeq \mathrm{diag}(S, \dots, S).
\]
A balanced configuration $q\in \R^{4n}$ is said to be balanced if it satisfies
\begin{equation}
\label{eqn:SBC}
M^{-1} \nabla U(q) + \lambda \hat{S} q = 0,
\end{equation}
for some $\lambda > 0$. This equation is invariant under the $SO(2) \times SO(2)$ action given by independent rotations in the $\R^2 \times \{0\}$ and $\{0\} \times \R^2$ planes. It follows that balanced configurations are never isolated: they occur in $S^1$ families if they lie in one of the two coordinate planes, and in $S^1 \times S^1$ families otherwise.

A fundamental property of central and balanced configurations is their variational characterization as critical points of the Newtonian potential $U$ under suitable normalization constraints. This variational viewpoint allows one to apply several techniques to study the $n$-body problem. Among these, an important approach is based on bifurcation theory, where varying a physical parameter --- such as the masses or the angular velocity --- allows one to detect new families of solutions and gain deeper insight into the structure of central and balanced configurations.
In finite-dimensional variational problems, bifurcation points are typically detected through jumps in the Morse index along trivial branches of critical points that are non-degenerate except at finitely many parameter values. A convenient way to encode these jumps is through the \emph{spectral flow}, an integer-valued invariant that counts the net number of eigenvalues of a continuous family of self-adjoint Fredholm operators crossing zero~\cite{spectral_flow, spec_flow_bifurcation}. In~\cite{APF}, this framework was used to study bifurcations of collinear central configurations in $\mathbb{R}^4$, that is, configurations in which all bodies lie on a line contained in the plane $\{0\} \times \R^2$. The authors showed that these configurations form a trivial branch for every $s > 1$, and that their Morse index jumps at specific values of $s$. The non-trivial branches bifurcating from this family correspond to balanced configurations. The main difficulty --- degeneracy due to rotational symmetry --- was resolved by reducing the problem to $\{0\} \times \mathbb{R}^2 \times \{0\}$, where such configurations become non-degenerate. 
In this work, we extend these results to planar central configurations in $\{0\} \times \mathbb{R}^2$. These configurations also form trivial branches of critical points, but the classical bifurcation theory cannot be applied directly, since planar configurations are never isolated critical points due to rotational symmetry and may in fact be Morse–Bott degenerate. This degeneracy prevents us from quotienting out the $SO(2)$-action induced by planar rotations. Our main contribution is therefore to develop a refined variational argument that applies to trivial branches of critical points which remain degenerate for all parameter values.

\begin{thm*}
Let $(F_s)_{s \in I}$ be a $\mathcal{C}^2$ family of functionals on a finite-dimensional smooth manifold $M$, and let $(q_s)_{s \in I}$ be a trivial branch of critical points of $F_s$. Denote by $(H_s)_{s \in I}$ the corresponding Hessians at $q_s$. Suppose that there exist $k$ smooth functions $s \mapsto v_i(s)$ such that the vectors $\{v_1(s), \dots, v_k(s)\}$ are linearly independent for every $s\in I$ and:
\begin{itemize}
\item for all $s \in I$, $\mathrm{Span}\{ v_1(s), \dots, v_k(s) \} \subseteq \ker H_s$,
\item for all $s \in I \setminus J$, $\ker H_s = \mathrm{Span}\{ v_1(s), \dots, v_k(s) \}$,
\end{itemize}
where $J \subset I$ is a finite set. Then the spectral flow $\mathrm{sf} (H_s, s \in I)$ is well-defined on $I$. Moreover, if it is nonzero, then there exists at least one bifurcation instant from the trivial branch.
\end{thm*}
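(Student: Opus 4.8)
The plan is to separate the ``harmless'' part of the kernel — the smooth rank-$k$ distribution $V(s)\defeq\mathrm{Span}\{v_1(s),\dots,v_k(s)\}$ along the trivial branch — from the rest, both when defining the spectral flow and when localizing the bifurcation analysis. Fix a Riemannian metric on $M$ (in the application, the mass metric) and set $W(s)\defeq V(s)^{\perp}\subseteq T_{q_s}M$, a smooth distribution of rank $\dim M-k$. Since $V(s)\subseteq\ker H_s$ and $H_s$ is self-adjoint, $W(s)$ is $H_s$-invariant, so the reduced Hessian $\widetilde H_s\defeq H_s|_{W(s)}$ is a $\mathcal{C}^1$ path of self-adjoint (hence Fredholm) operators with $\ker\widetilde H_s=\ker H_s\cap W(s)$, which by hypothesis is trivial for $s\in I\setminus J$. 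The spectral flow of a path of self-adjoint operators that is invertible off a finite set is well-defined in the usual sense (with the standard endpoint convention, or — since only the nonzero case matters — after shrinking $I$ so that $\partial I\cap J=\emptyset$). I would then \emph{define} $\mathrm{sf}(H_s,s\in I)\defeq\mathrm{sf}(\widetilde H_s,s\in I)$; this is forced, because with respect to $T_{q_s}M=W(s)\oplus V(s)$ one has $H_s=\widetilde H_s\oplus 0_{V(s)}$, and the constant zero eigenvalue of multiplicity $k$ coming from $V(s)$ crosses nothing.

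For the bifurcation statement I would argue by contradiction. If $J=\emptyset$ then $\widetilde H_s$ is invertible throughout and $\mathrm{sf}=0$, so assume $J\neq\emptyset$ and that no $s_0\in J$ is a bifurcation instant. Since the spectral flow over $I$ is the sum of the local crossing contributions, pick $s_0\in J$ at which the local contribution to $\mathrm{sf}(\widetilde H_s)$ is nonzero. Near $q_{s_0}$, using a smooth family of charts along the branch, identify a neighborhood of $q_s$ in $M$ with a neighborhood of $0$ in $T_{q_{s_0}}M=K_0\oplus K_0^{\perp}$, $K_0\defeq\ker H_{s_0}$, so that $q_s\mapsto 0$. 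The $K_0^{\perp}$-component of $\nabla F_s(x)=0$ has invertible linearization in the $K_0^{\perp}$-direction at $(s_0,0)$, hence is uniquely solvable for $(s,\xi)$ near $(s_0,0)$, $\xi\in K_0$; substituting the solution yields a reduced $\mathcal{C}^2$ family $\Psi_s$ on a neighborhood of $0$ in $K_0$ whose critical points correspond bijectively to those of $F_s$ near $q_{s_0}$, with $D^2\Psi_{s_0}(0)=0$ and $m(H_s)=m\big(D^2\Psi_s(0)\big)+m\big(H_{s_0}|_{K_0^{\perp}}\big)$ for $s$ near $s_0$, the last term constant.

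The persistent kernel now re-enters: since $V(s)$ is smooth and $V(s_0)\subseteq K_0$, for $s$ near $s_0$ the reduced Hessian $D^2\Psi_s(0)$ has a $k$-dimensional kernel, and for $s\in I\setminus J$ the trivial branch of $\Psi_s$ sits in a $k$-dimensional Morse–Bott critical submanifold $\mathcal{N}_s\ni 0$ tangent to $V(s)$ — here one uses that the $v_i(s)$ are tangent to an actual family of critical points (the $SO(2)$-orbit in the application), and ``bifurcation instant'' is read as the appearance of critical points off $\mathcal{N}_s$. Its Morse–Bott index is $m(D^2\Psi_s(0))=m(H_s)-m(H_{s_0}|_{K_0^{\perp}})$, which jumps as $s$ crosses $s_0$ exactly because the local spectral flow at $s_0$ is nonzero. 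Were $s_0$ not a bifurcation instant, there would be a common isolating neighborhood $B$ of $0$ in which $\Psi_s$ has, for all $s$ near $s_0$, no critical points off $\mathcal{N}_s$; by the continuation property of the Conley index, the local index (equivalently the critical groups) of the isolated invariant set $\mathcal{N}_s\cap B$ would be $s$-independent. But $\mathcal{N}_s$ is locally contractible, so this index is homotopy equivalent to a sphere of dimension equal to the Morse–Bott index, and a jump of that index is a contradiction.

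The main obstacle is precisely the persistent degeneracy that the hypotheses permit: since $H_s$ is degenerate for \emph{every} $s$, the trivial branch is never an isolated zero of the gradient, so the classical index-jump bifurcation criterion does not apply directly, and one cannot pass to the symmetry quotient because the induced action fails to be free with nondegenerate transverse Hessian exactly at the points of $J$. Circumventing this forces one to isolate the constant part $V(s)$ of the kernel in both steps and, in the reduction, to verify that the remainder is genuinely Morse–Bott away from $J$ — in particular that $\dim\ker D^2\Psi_s(0)=k$ for $s$ near but distinct from $s_0$ — so that the continuation argument applies and the index jump it detects equals the local spectral flow. That bookkeeping, together with the smooth family of charts along the branch and the fact that Lyapunov–Schmidt preserves the variational structure, is where the real work lies.
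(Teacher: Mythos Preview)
Your reduction of the spectral flow is exactly the paper's: extend the smooth frame $\{v_i(s)\}$ to an orthonormal basis of $T_{q_s}M$, observe that the self-adjoint operator $L_s$ representing $H_s$ is block-diagonal with a $k\times k$ zero block, and pass to the restricted operator $\bar L_s$ on the complement $W(s)$, which is singular only on the finite set $J$. Where you diverge is in the bifurcation step. The paper does not argue from scratch: having produced an admissible path $(\bar H_s)$ on $W(s)$, it simply invokes the classical finite-dimensional bifurcation theorem (its Theorem~\ref{thm_bif_1}, from \cite{APF}) and stops. You instead localize at an $s_0\in J$ with nonzero crossing contribution, perform a Lyapunov--Schmidt reduction onto $K_0=\ker H_{s_0}$, and run a Conley-index continuation argument against a Morse--Bott normal form for the reduced functional $\Psi_s$. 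This is a legitimate and more transparent route --- it exposes the topological obstruction directly rather than hiding it in a citation --- but, as you yourself flag, it needs the $v_i(s)$ to be tangent to an actual $k$-dimensional critical submanifold $\mathcal N_s$, so that the isolated invariant set for $\Psi_s$ really is Morse--Bott and its Conley index is the sphere whose dimension jumps. The paper's abstract statement does not carry that hypothesis; it offloads the whole bifurcation mechanism to the cited black-box on the reduced space (and is correspondingly terse about how $\bar H_s$ is to be read as the Hessian of a genuine reduced functional). In the intended application the two approaches coincide: the persistent kernel is the tangent to the $SO(2)$-orbit, so one may either pass to the quotient $\bar{\mathcal S}$ and apply Theorem~\ref{thm_bif_1} verbatim, or run your Morse--Bott/Conley argument along the orbit.
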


As a direct application of the theorem above, we obtain the following result for the family of planar balanced configurations:

\begin{thm*}
Let $s_1, s_2 > 1$, and let $(\hat{q}_s)_{s \in [s_1, s_2]}$ denote the trivial branch of solutions generated by a planar central configuration in the plane $\{ 0 \} \times \mathbb{R}^2$. For $s_1$ sufficiently close to $1$ and $s_2$ sufficiently large, there exists at least one bifurcation point along the trivial branch.
\end{thm*}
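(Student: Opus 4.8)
The plan is to recast balanced configurations as constrained critical points, to recognise the planar central configuration as generating a \emph{constant} branch, to block-diagonalise the Hessian along this branch into an $s$-independent ``in-plane'' part and a monotone ``normal'' part, to verify the hypotheses of the theorem above with the kernel vectors chosen inside the fixed kernel of the in-plane block, and finally to compute the spectral flow.

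\emph{Variational set-up.} By \eqref{eqn:SBC}, a balanced configuration with multiplier $\lambda>0$ is, after normalisation, a critical point of $U$ on the ellipsoid $E_s\defeq\{q\in\mathcal C:\langle M\hat S q,q\rangle=1\}$, the Lagrange multiplier being a fixed multiple of $\lambda$. If $\hat q$ is a planar central configuration lying in $\{0\}\times\R^2$ then $\hat S\hat q=\hat q$, since $S$ is the identity on the last two coordinates; hence $\hat q\in E_s$ is a critical point of $U|_{E_s}$ for \emph{every} $s>1$, with an $s$-independent multiplier. This is the trivial branch $(\hat q_s)_s$, so we are in the setting of the theorem above with the finite-dimensional manifold $E_s\cap\mathcal C$ and the $\mathcal C^2$ family $F_s=U|_{E_s}$ (equivalently, after the substitution $q=\hat S^{-1/2}w$, the family $U\circ\hat S^{-1/2}$ on a fixed mass sphere).

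\emph{Block decomposition and the kernel.} Split $\R^{4n}=\R^{2n}\oplus\R^{2n}$ into the ``normal'' coordinates (those rescaled by $\hat S$) and the ``in-plane'' ones. Since $\hat q$ lies in the in-plane subspace, the reflection flipping the normal coordinates fixes $\hat q$, so the Hessian $H_s$ of $F_s$ at $\hat q$ splits without coupling as $H_s=H_s^\parallel\oplus H_s^\perp$; a direct computation shows that $H_s^\parallel$ is independent of $s$ (on in-plane directions $\hat S$ and the constraint act as for an ordinary central configuration), while on the normal directions one obtains, in two identical decoupled copies and restricted to $V\defeq\{v\in\R^n:\sum_i m_iv_i=0\}$, the quadratic form $v\mapsto\lambda s\sum_i m_iv_i^2-\sum_{i<j}m_im_j\abs{q_i-q_j}^{-3}(v_i-v_j)^2$. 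Thus all $s$-dependence sits in $H_s^\perp$, which increases \emph{strictly} with $s$. Let $\mu_{\max}$ denote the top eigenvalue of the weighted Laplacian $v\mapsto\sum_{i<j}m_im_j\abs{q_i-q_j}^{-3}(v_i-v_j)^2$ relative to the mass form $v\mapsto\sum_i m_iv_i^2$ on $V$; then $H_s^\perp$ is degenerate precisely when $\lambda s$ equals such an eigenvalue, which occurs for finitely many $s$. Consequently $\ker H_s=\ker H_s^\parallel\oplus\ker H_s^\perp$, where $\ker H_s^\parallel$ is a fixed subspace --- it always contains the generator $J\hat q$ of the planar-rotation $SO(2)$ still acting on $\hat q$. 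Taking $v_1,\dots,v_k$ to be a constant basis of $\ker H_s^\parallel$, the two conditions of the theorem above hold with $J\subset I$ the finite set where $H_s^\perp$ degenerates, so $\mathrm{sf}(H_s,s\in[s_1,s_2])$ is well defined once $s_1,s_2\notin J$. This is exactly where the reduction used in \cite{APF} for collinear configurations breaks down: here $\ker H_s^\parallel\neq\{0\}$ for all $s$, so the $SO(2)$-action cannot be quotiented out, which is why the abstract theorem above is needed.

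\emph{Spectral flow and the main obstacle.} Since $H_s^\parallel$ is constant and $H_s^\perp$ monotone, $\mathrm{sf}(H_s,[s_1,s_2])$ equals twice the number of eigenvalues (with multiplicity) of the above generalised eigenvalue problem lying in $(\lambda s_1,\lambda s_2)$. Choosing $s_2$ large includes every eigenvalue exceeding $\lambda s_1$, so the spectral flow is nonzero as soon as $\mu_{\max}>\lambda s_1$, hence --- letting $s_1$ tend to $1$ --- as soon as $\mu_{\max}>\lambda$; the theorem above then produces a bifurcation point. \emph{Establishing this strict inequality is the heart of the matter.} One always has $\mu_{\max}\ge\lambda$: the coordinate vectors $u,u'\in V$ with $u_i=(q_i)_3$, $u'_i=(q_i)_4$ satisfy $\sum_{i<j}m_im_j\abs{q_i-q_j}^{-3}\big((u_i-u_j)^2+(u'_i-u'_j)^2\big)=U(\hat q)$ and $\sum_i m_i(u_i^2+u_i'^2)=\sum_i m_i\abs{q_i}^2$, while $U(\hat q)=\lambda\sum_i m_i\abs{q_i}^2$ by the Euler relation at a central configuration; the mediant inequality then forces one of the two Rayleigh quotients to be $\ge\lambda$. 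Strictness, however, genuinely uses the geometry of $\hat q$ and can fail in exceptional symmetric cases --- for instance, for the Lagrange equilateral configuration (and any masses) one has $\mu_{\max}=\lambda$ and no bifurcation is detected by this mechanism. Proving $\mu_{\max}>\lambda$ for the planar central configuration under consideration --- by exhibiting a vector in $V$ with Rayleigh quotient strictly above $\lambda$, or through a dedicated estimate on the top generalised eigenvalue --- is therefore the main obstacle; once it is in place the spectral flow has modulus at least $2$, and isolating the successive eigenvalue crossings inside $[s_1,s_2]$ yields the lower bound on the number of bifurcation points announced in the introduction.
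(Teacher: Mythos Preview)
Your approach coincides with the paper's: block-diagonalise the Hessian of $U|_{\mathcal S}$ at $\hat q$ into an $s$-independent planar part and a normal part affine in $s$, take the (constant) planar kernel as the subspace $K_\lambda$ of the abstract theorem, and read the spectral flow off the normal block. The only cosmetic difference is that the paper first restricts to $\{0\}\times\R^3\subset\R^4$, so that there is a single normal copy rather than your two identical ones; the computation of the spectral flow is then packaged as Proposition~\ref{thm:Morse}.

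What you flag as the ``main obstacle'' --- the strict inequality $\mu_{\max}>\lambda$ --- is precisely the input the paper uses, and the paper does not prove it either: Section~\ref{sec_2.1} simply asserts that for a planar central configuration with more than three masses the matrix $M^{-1}B(\hat q)$ always has at least one eigenvalue strictly below $-U(\hat q)$, and the proof of Theorem~\ref{thm_bif} then quotes this to get $\mathrm{sf}=n-1-\alpha>0$. Your observation that equality $\mu_{\max}=\lambda$ holds for the Lagrange triangle confirms that $n\ge 4$ is genuinely needed, and your mediant argument for $\mu_{\max}\ge\lambda$ already goes beyond what the paper supplies. So you have reconstructed the paper's argument and been more explicit about the one lemma it takes for granted.
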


We conclude by presenting numerical simulations of non-trivial branches bifurcating from the trivial branches of planar central configurations for $n = 4$ and $n = 5$. For instance, we find that the planar central configuration with four equal masses placed at the vertices of a square is connected to the regular tetrahedron at $s = 1$ through a family of balanced configurations. A similar behaviour is observed for a planar central configuration with three equal masses at the vertices of an equilateral triangle and a mass at the center of mass. The nature of the these configurations changes depending on the mass value at the center. In particular, there exists a critical mass value $m^*$, which corresponds to the point where the planar central configuration becomes Morse-Bott degenerate; this marks the threshold where the nature of the configurations changes.

\section{Balanced configurations: definition and main properties}
\label{section_2}

In this section, we recall the definition of balanced configurations in $\R^4$ and summarize their main properties. For further details see~\cite{AP, Moeckel2014}.

\medskip

Consider $n$ point masses $m_1, \dots, m_n > 0$ with positions $q_1, \dots, q_n \in \R^4$ moving according to Newton's law of gravitation 

\begin{equation}
\label{eqn_Newton}
\ddot{q} = M^{-1} \nabla U(q)
\end{equation}
where $q \defeq (q_1, \dots, q_n) \in \R^{4n}$ and $M \defeq \text{diag}(m_1 \mathbb{I}_4, \dots, m_n \mathbb{I}_4)$. As before, we restrict our attention to the collision-free configuration space with center of mass at the origin

\[
\mathcal{C} = \{ q \in \R^{4n} \mid \bar{q} = 0, \, q_i \neq q_j \, \text{for} \, i \neq j \}.
\]
Let $s > 1$ be a positive real number, and define the matrices 

\[
S \defeq \text{diag}(s, s, 1, 1) \in \R^{4 \times 4},
\quad \hat{S} \defeq \text{diag}(S, \dots, S) \in \R^{4n\times 4n}.
\]

\begin{dfn} 
\label{dfn_SBC}
A balanced configuration is an arrangement of the masses that satisfies the equation

\begin{equation}
\label{eqn_SBC}
M^{-1} \nabla U(q) + \lambda \hat{S} q = 0,
\end{equation}
where $\lambda > 0$ is a positive constant.
\end{dfn}

\begin{rmk}
As pointed out by Asselle and Portaluri~\cite{AP}, this definition is equivalent to that of Albouy and Chenciner~\cite{Albouy_Chen}, also used by Moeckel~\cite{Moeckel2014}, where $S$ is a symmetric positive definite matrix. Since the problem is invariant under orthogonal changes of coordinates, one may always assume that $S$ is diagonal, i.e. that the spectral basis of $S$ coincides with the canonical basis of $\R^4$. It is also often required that $S = - A^2$ with $A$ skew-symmetric matrix. In particular, in even dimension this condition ensures that every balanced configuration generates a relative equilibrium of the $n$-body problem.
\end{rmk}

Balanced configurations strictly contain central configurations. Indeed, by setting $s = 1$ in equation~\eqref{eqn_SBC}, we recover the central configuration equation 

\begin{equation}
\label{eqn_CC}
M^{-1} \nabla U(q) + \lambda q = 0.
\end{equation}
The main difference is that any balanced configuration can produce a relative equilibrium solution of~\eqref{eqn_Newton} of the form 

\[
q(t) \defeq \begin{pmatrix}
e^{i \sqrt{s\lambda} t}& 0 \\
0 & e^{i \sqrt{\lambda} t}
\end{pmatrix} 
\cdot q
\]
whereas only planar central configurations give rise to relative equilibria. Furthermore, in the balanced case, the motion does not necessarily have to be periodic, but it can be quasi-periodic when $\sqrt{s}$ is irrational. 
This difference follows from the fact that central configurations are invariant under the $SO(4)$-group, while balanced configurations are invariant under the $SO(2) \times SO(2)$ action given by rotations in the $\R^2 \times \{ 0 \}$ and $\{ 0 \} \times \R^2$ planes. In particular, balanced configurations are never isolated but come in $S^1$ families if they belong to one of the two planes mentioned above and in $S^1 \times S^1$ families otherwise.

We define the weighted scalar product $\langle \cdot, \cdot \rangle_S$ as

\[
\langle \cdot, \cdot \rangle_S \defeq \langle \hat{S} M \cdot, \cdot \rangle.
\]
Observing that the Newtonian potential $U(q)$ is homogeneous of degree $-1$, and applying Euler's theorem for homogeneous functions, we are able to determine the value of the constant $\lambda$ in equation~\eqref{eqn_SBC}:

\[
\lambda = \frac{U(q)}{\abs{q}^2_S}.
\]
It follows immediately from Definition~\ref{dfn_SBC} that if $q$ is a balanced configuration, then $k q$ is also a balanced configuration for any $k \in \R$. Therefore, we can restrict our attention to the collision free configuration sphere

\[
\s \defeq \{ q \in \mathcal{C} \mid \abs{q}^2_S = 1 \}.
\]
The manifold $\s$ is an open subset of a smooth compact manifold diffeomorphic to a $(4n - 5)$-dimensional sphere. Moreover, since in $\s$ we have $\lambda = U(q)$, equation~\eqref{eqn_SBC} becomes

\begin{equation}
\label{eqn_SBC_n}
M^{-1} \nabla U(q) + U(q) \hat{S} q = 0.
\end{equation}

Finally, we describe the variational characterization of balanced configurations, a key feature that will play a fundamental role in our discussion. The proof of this theorem can be found in~\cite{Moeckel2014}.

\begin{thm}
A configuration $q$ is a balanced configuration if and only if it is a critical point of $U|_{\s}$.
\end{thm}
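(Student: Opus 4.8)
The plan is to read the statement as a Lagrange-multiplier computation on the constraint hypersurface $\s$, with Euler's identity for homogeneous functions supplying the positivity of $\lambda$. First I would make the smooth structure of $\s$ explicit. The center-of-mass condition confines $q$ to the linear subspace $E \defeq \{ q \in \R^{4n} : \sum_{i=1}^n m_i q_i = 0 \}$ of dimension $4(n-1)$. Since $S = \mathrm{diag}(s,s,1,1)$ is positive definite (as $s>1$) and the $m_i$ are positive, the matrix $\hat S M = M \hat S$ is symmetric positive definite, so $\langle \cdot, \cdot\rangle_S$ is a genuine inner product and $h(q) \defeq \abs{q}_S^2 - 1 = \langle \hat S M q, q\rangle - 1$ has differential $dh_q(\xi) = 2\langle \hat S M q, \xi\rangle$, which is non-zero on $\{h=0\}$ (indeed $\langle \hat S M q, q\rangle = 1$ there). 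Hence $\s$ is an open subset --- the collision-free part --- of a smooth hypersurface of $E$, with $T_q \s = \{ \xi \in E : \langle \hat S M q, \xi\rangle = 0\}$, of dimension $4n-5$.

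The core step is the Lagrange-multiplier characterization. A point $q \in \s$ is a critical point of $U|_{\s}$ if and only if $dU_q = \langle \nabla U(q), \cdot\rangle$ vanishes on $T_q\s$, i.e. (since $\langle \hat S M q, \cdot\rangle$ is a non-zero functional on $E$ with kernel $T_q\s$ of codimension one) if and only if there is $\mu \in \R$ with $\nabla U(q) + \mu\, \hat S M q \in E^{\perp}$. A direct computation gives $E^{\perp} = \{ (m_1 w, \dots, m_n w) : w \in \R^4\}$, so the relation reads $\nabla_{q_i} U(q) + \mu\, m_i S q_i = m_i w$ for every $i$ and a common $w \in \R^4$. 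Summing over $i$ and using the translation invariance of $U$ (which yields $\sum_i \nabla_{q_i} U(q) = 0$) together with $\sum_i m_i q_i = 0$, I obtain $\bar m\, w = 0$, hence $w = 0$ because $\bar m > 0$. Dividing the $i$-th block by $m_i$ and using $M^{-1}\hat S M = \hat S$ gives exactly $M^{-1}\nabla U(q) + \lambda \hat S q = 0$ with $\lambda \defeq \mu$. The converse is this chain read backwards: if $q \in \s$ satisfies the balanced equation, then contracting it with $M\xi$ for $\xi \in T_q\s$ annihilates the $\hat S$-term and leaves $dU_q(\xi) = 0$.

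It remains to fix the sign of $\lambda$. Since $U$ is homogeneous of degree $-1$, Euler's theorem gives $\langle \nabla U(q), q\rangle = -U(q)$; contracting the balanced equation with $Mq$ and using $\langle \hat S q, M q\rangle = \abs{q}_S^2 = 1$ on $\s$ yields $-U(q) + \lambda = 0$, so $\lambda = U(q) > 0$ because $U$ is a sum of strictly positive terms. This also reproduces the normalization $\lambda = U(q)$ used above. For configurations in $\mathcal{C}$ not normalized to $\s$, the homogeneity of $\nabla U$ (degree $-2$) shows that $q \mapsto q/\abs{q}_S$ sends balanced configurations to balanced configurations on $\s$ with a positive multiplier, so the equivalence extends modulo scaling.

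The only slightly delicate point --- and the one I would be most careful about --- is the passage from the ``$E^{\perp}$-valued'' Lagrange relation to the genuine identity in $\R^{4n}$: one must invoke the translation invariance of $U$ together with $\bar m > 0$ to kill the residual center-of-mass component $w$. Everything else is routine bookkeeping with the weighted inner product $\langle \cdot, \cdot\rangle_S$ and Euler's identity.
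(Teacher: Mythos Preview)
Your argument is correct: the Lagrange-multiplier characterization on the weighted sphere $\s\subset E$, the elimination of the $E^{\perp}$-component via translation invariance and $\bar q=0$, and the identification $\lambda=U(q)>0$ through Euler's identity are all sound and carefully handled. Note, however, that the paper does not actually supply its own proof of this theorem --- it simply defers to Moeckel's lecture notes~\cite{Moeckel2014} --- so there is nothing in the paper to compare against beyond observing that your argument is exactly the standard variational derivation one finds in such references.
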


\noindent Thus, the problem of finding balanced configuration is essentially that of finding critical points of $U|_{\s}$ or, equivalently, rest points of the gradient flow of $U|_{\s}$.
The Hessian of $U|_\s : \s \rightarrow \R$ at a critical point $q$ is the quadratic form on $T_q \s$ represented, with respect to the mass scalar product $\langle M \cdot, \cdot \rangle$, by the $(4n \times 4n)$-matrix

\[
H(q) = M^{-1} D^2 U(q) + \hat{S} U(q).
\]
A straightforward computation shows that the $(i, j)$-th entry of $D^2 U(q)$ is given by 

\begin{align*}
D_{ij} & = \frac{m_i m_j}{r^3_{ij}} (\mathbb{I}_4 - 3 u_{ij} u^t_{ij}) \quad \text{for} \, i \neq j \\
D_{ii} & = - \sum_{j \neq i} D_{ij}
\end{align*}
where $r_{ij} \defeq \abs{q_i - q_j} $ and $u_{ij} \defeq \frac{q_i - q_j}{\abs{q_i - q_j}}$. 
The rotational invariance described above implies that balanced configurations are never isolated critical points of $U|_{\s}$ and that the Hessian is always degenerate. We say that a balanced configuration is Morse-Bott non-degenerate if its nullity is as small as possible, given the rotational symmetry.

\section{Morse index of planar balanced configurations}
\label{sec_2.1}

The aim of this section is to analyze how the inertia indices of planar balanced configurations lying in the plane $\{ 0 \} \times \R^2$ depend on the parameter $s$. 

\medskip

To reduce the effect of rotational symmetry, we restrict our attention to spatial configurations contained in $\{0\} \times \R^3 \subset \R^4$. The associated configuration space is
\[
\hat{\mathcal{S}} = \left\{ q \in \mathcal{S} \; \middle| \; q_i \in \{ 0 \} \times \R^3 \; \forall \; i = 1, \dots, n \right\} \subset \mathcal{S}.
\]
Identifying $\{ 0 \} \times \R^3$ with $\R^3$, the matrix $S$ in the balanced configuration equation reduces to
\[
S = \mathrm{diag}(s, 1, 1).
\]
Hence, balanced configurations occur in $S^1$-families, corresponding to the $SO(2)$-action generated by rotations in the plane $\{0\}\times \R^2$ (rather than the $S^1\times S^1$-families that may arise in $\R^4$). Moreover, by the $45^\circ$-Theorem for balanced configurations, the action is free on $\hat{\mathcal{S}} \setminus \mathcal{C}_\mathrm{coll}$, where $\mathcal{C}_\mathrm{coll}$ denotes the manifold of collinear configurations. Thus, working in $\R^3$ with $\hat{S}=\mathrm{diag}(s,1,1)$ yields a simpler yet equivalent setting for analyzing planar bifurcations.

\begin{rmk}
The classical $45^\circ$–Theorem for collinear central configurations states that the manifold of collinear configurations is an attractor for the projectivized gradient flow of $U|_\mathcal{S}$: trajectories starting near it become increasingly collinear, and no non-collinear central configurations exist in its neighborhood. 
For balanced configurations, this property holds only for configurations collinear along the $e_1$–axis, which corresponds to the eigenvector associated with the distinct eigenvalue $s>1$ of $S = \mathrm{diag}(s,1,1)$. For certain values of $s$, however, collinear balanced configurations along other lines may become local minima of $U|_\mathcal{S}$, and therefore do not satisfy the $45^\circ$-Theorem; for more details see~\cite{AP} and reference therein. 
In $\mathbb{R}^4$, the manifold $\mathcal{C}_\mathrm{coll}$ of configurations collinear along the $e_1$–axis coincides with the singular set of the $SO(2)$–action generated by rotations in the plane $\{0\}\times \mathbb{R}^2$. 
Hence, this collinear set can be safely removed before performing the $SO(2)$–reduction, yielding a smooth quotient
\[
\bar{\mathcal{S}} = (\mathcal{S} \setminus \mathcal{C}_\mathrm{coll}) / SO(2).
\]
\end{rmk}

With these preliminaries in place, we proceed to analyze planar balanced configurations in the plane $\{0\} \times \R^2$. Let $\hat q$ denote one such configuration. We observe that this is, in fact, a central configuration for all $s > 1$. By rearranging the coordinates of $\hat q$ as $(x_1, \dots, x_n, y_1, \dots, y_n, z_1, \dots, z_n)$,  the Hessian matrix can be decomposed into the following block form:

\[
H(\hat{q}) = \begin{pmatrix}
M^{-1} B(\hat{q}) & 0 \\
0 & M^{-1} D(\hat{q})
\end{pmatrix} + 
\begin{pmatrix}
s U(\hat{q}) I_n & 0 \\
0 & U(\hat{q}) I_{2n}
\end{pmatrix}
\]
Here:

\begin{itemize}
    \item $B(\hat{q})$ is an $n \times n$ matrix, where the $(i, j)$-th entry is given by 
    \[
    b_{ij} = \frac{m_i m_j}{r_{ij}^3}, \quad b_{ii} = -\sum_{j \neq i} b_{ij};
    \]

    \item $D(\hat{q})$ is a 2$n \times 2n$ block matrix, with its blocks defined as 
    \[
    D_{ij} = \frac{m_i m_j}{r_{ij}^3}
    \begin{pmatrix}
    1 - 3 \cos^2{\theta_{ij}} & -3 \sin{\theta_{ij}} \cos{\theta_{ij}} \\
    -3 \sin{\theta_{ij}} \cos{\theta_{ij}} & 1 - 3 \sin^2{\theta_{ij}}
    \end{pmatrix},
    \quad D_{ii} = -\sum_{j \neq i} D_{ij},
    \]
    where $\theta_{ij}$ is the angle between the vector $(q_i - q_j)$ and the $y$-axis in $\R^3$. 
\end{itemize}

\noindent
Using this decomposition, we define:

\begin{itemize}
    \item the normal inertia indices, $(\iota^-_n(\hat{q}), \, \iota^0_n(\hat{q}), \, \iota^+_n(\hat{q}))$, which correspond to the inertia indices of $M^{-1} B(\hat{q}) + s U(\hat{q}) \mathbb{I}_n$;
    \item the planar inertia indices, $(\iota^-_p(\hat{q}), \, \iota^0_p(\hat{q}), \, \iota^+_p(\hat{q}))$, which correspond to the inertia indices of $M^{-1} D(\hat{q}) + U(\hat{q}) \mathbb{I}_{2n}$.
\end{itemize}

\begin{rmk}
For a planar central configuration, it is known that $\iota^0_n(\hat{q}) \geq 2$ and $\iota^0_p(\hat{q}) \geq 1$, with equality holding only if $\hat{q}$ is Morse-Bott non-degenerate, i.e. its nullity is minimal given the rotational symmetry.
\end{rmk}

From the formula for the Hessian matrix, it is evident that the planar inertia indices remain unchanged as $s$ varies. In particular, the planar nullity is always at least $1$, implying that a planar balanced configuration $\hat{q}$ is degenerate as a critical point of $U|_\s$ for every choice of $s > 1$. 
We now provide a complete characterization of the normal inertia indices, depending on the spectrum of the matrix $M^{-1} B(\hat{q})$ and the parameter $s$. The $(n \times n)$-matrix $M^{-1} B(\hat{q})$ has one null eigenvalue which does not contribute to the inertia indices, since its corresponding eigenvector $(1, \dots, 1)$ is transverse to the tangent space $T_{\hat{q}} \hat\s$. Of the other $(n - 1)$ eigenvalues, at least $2$ are equal to $- U(\hat{q})$. Moreover, if we have more than three masses, there always exists at least one eigenvalue smaller than $-U(\hat q)$. The existence of these eigenvalues will play a crucial role in the following analysis, since they will be relate to the values of $s$ at which bifurcations occur. We denote the distinct eigenvalues of $M^{-1} B(\hat{q})$ by
\[
\mu_k(\hat{q}) < \dots < \mu_l(\hat{q}) = - U(\hat{q}) < \dots < 0 < \dots < \mu_1
\]
and their corresponding multiplicity by $\alpha_1, \dots, \alpha_k$ with $\alpha_l \geq 2$ and $k > l$.

\begin{prop}\label{thm:Morse}
Let $\hat{q}$ be a planar balanced configuration in the plane $\{0\} \times \R^2$. 
The normal inertia indices of $\hat{q}$ vary with the parameter $s$ as follows:

\begin{itemize}
\item[(i)] If $-\tfrac{\mu_i}{U(\hat{q})} < s < -\tfrac{\mu_{i+1}}{U(\hat{q})}$ for some $i \in \{l,\dots,k-1\}$, then
\[
\iota^0_n(\hat{q}) = 0, \quad \iota^+_n(\hat{q}) = \sum_{j = 1}^{i} \alpha_j, \quad \iota^-_n (\hat{q}) = n - 1 - \sum_{j = 1}^{i} \alpha_j.
\]

\item[(ii)] If $s = -\tfrac{\mu_i}{U(\hat{q})}$ for some $i \in \{l+1,\dots,k\}$, the $i$-th eigenvalue vanishes and contributes its multiplicity $\alpha_i$ to the nullity:
\[
\iota^0_n(\hat{q}) = \alpha_i, \quad \iota^+_n(\hat{q}) = \sum_{j = 1}^{i - 1} \alpha_j, \quad \iota^-_n (\hat{q}) = n - 1 - \sum_{j = 1}^{i} \alpha_j.
\]

\item[(iii)] For $s > -\tfrac{\mu_k}{U(\hat{q})}$, all eigenvalues are positive and
\[
\iota^0_n(\hat{q}) = 0, \quad \iota^+_n(\hat{q}) = n - 1, \quad \iota^-_n (\hat{q}) = 0.
\]
\end{itemize}
\end{prop}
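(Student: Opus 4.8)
The plan is to diagonalise the self-adjoint operator governing the normal block of the Hessian and to read off the three inertia indices eigenspace by eigenspace, tracking how the sign of each eigenvalue changes with $s$.

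First I would record the structure of $B(\hat q)$. Since $b_{ij}=m_im_j/r_{ij}^3>0$ for $i\neq j$ and $b_{ii}=-\sum_{j\neq i}b_{ij}$, the matrix $-B(\hat q)$ is the weighted Laplacian of the complete graph on the $n$ bodies with strictly positive edge weights; hence $B(\hat q)$ is negative semidefinite and $\ker B(\hat q)=\mathrm{Span}\{(1,\dots,1)\}$ is one-dimensional. The operator $M^{-1}B(\hat q)$ is self-adjoint with respect to the mass scalar product $\langle M\cdot,\cdot\rangle$, so it is diagonalisable with real eigenvalues and an $\langle M\cdot,\cdot\rangle$-orthogonal eigenbasis; by the above, $0$ is a simple eigenvalue with eigenvector $(1,\dots,1)$ and the remaining $n-1$ eigenvalues are strictly negative, so they are precisely the $\mu_1>\dots>\mu_k$ of the statement (with $\mu_l=-U(\hat q)$, $\alpha_l\geq 2$, $k>l$), and $\sum_{i=1}^k\alpha_i=n-1$.

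Next I would pin down the subspace on which the normal indices live. Because $\hat q$ lies in $\{0\}\times\R^2$, its $e_1$-components vanish, so in the normal (first) coordinates the sphere constraint $\langle\hat S M\hat q,v\rangle=0$ is vacuous and only the centre-of-mass constraint survives, reading $\sum_i m_i v_i^x=\langle M(1,\dots,1),v^x\rangle=0$. Hence the normal part of $T_{\hat q}\hat{\s}$ is the $(n-1)$-dimensional space $V\defeq\{(1,\dots,1)\}^{\perp_M}=\bigoplus_{i=1}^k E_{\mu_i}$, where $E_{\mu_i}$ is the $\mu_i$-eigenspace of $M^{-1}B(\hat q)$. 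As $V$ is invariant under $M^{-1}B(\hat q)$ and $sU(\hat q)\mathbb{I}_n$ is scalar, the restriction of $M^{-1}B(\hat q)+sU(\hat q)\mathbb{I}_n$ to $V$ acts on $E_{\mu_i}$ as multiplication by $\mu_i+sU(\hat q)$; equivalently, relative to $\langle M\cdot,\cdot\rangle|_V$, the Hessian quadratic form restricted to $V$ is the $\langle M\cdot,\cdot\rangle$-orthogonal direct sum $\bigoplus_i\bigl(\mu_i+sU(\hat q)\bigr)\langle M\cdot,\cdot\rangle|_{E_{\mu_i}}$.

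It then only remains to count. Since $U(\hat q)>0$, the summand on $E_{\mu_i}$ is positive definite, negative definite, or identically zero according to whether $s>-\mu_i/U(\hat q)$, $s<-\mu_i/U(\hat q)$, or $s=-\mu_i/U(\hat q)$; by orthogonality the inertia indices of the restricted form are the corresponding sums of the $\alpha_i$. The thresholds satisfy $-\mu_1/U(\hat q)<\dots<-\mu_l/U(\hat q)=1<\dots<-\mu_k/U(\hat q)$, so for every $s>1$ all eigenspaces $E_{\mu_i}$ with $i\leq l$ already contribute to $\iota^+_n(\hat q)$. Splitting $(1,\infty)$ into the regions (i) $-\mu_i/U(\hat q)<s<-\mu_{i+1}/U(\hat q)$ for $i\in\{l,\dots,k-1\}$, (ii) $s=-\mu_i/U(\hat q)$ for $i\in\{l+1,\dots,k\}$, and (iii) $s>-\mu_k/U(\hat q)$, and using $\sum_{i=1}^k\alpha_i=n-1$, yields in each case exactly the three sets of formulas in the statement. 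I do not expect a genuine obstacle: the only steps requiring care are the identification of $V$ with the normal part of $T_{\hat q}\hat{\s}$, which rests precisely on the vanishing of the $e_1$-components of $\hat q$ so that the sphere constraint drops out of the normal block, and the bookkeeping of the thresholds $-\mu_i/U(\hat q)$ against the standing hypothesis $s>1$, where the identity $\mu_l=-U(\hat q)$ places the critical threshold exactly at $s=1$. The facts $\mu_l=-U(\hat q)$ with $\alpha_l\geq 2$, and $\mu_k<-U(\hat q)$ when $n>3$, are quoted from the discussion preceding the proposition.
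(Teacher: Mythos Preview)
Your proposal is correct and follows essentially the same approach as the paper: diagonalise $M^{-1}B(\hat q)+sU(\hat q)\mathbb{I}_n$ on the $(n-1)$-dimensional subspace orthogonal to $(1,\dots,1)$, note that its eigenvalues there are precisely $\mu_j+sU(\hat q)$, and count signs as $s$ crosses the thresholds $-\mu_j/U(\hat q)$. You are simply more explicit than the paper about why the zero eigenvalue of $M^{-1}B(\hat q)$ is excluded (via the centre-of-mass constraint) and why the sphere constraint imposes no further restriction on the normal block; the paper states these facts in the paragraph preceding the proposition and then gives a terser eigenvalue count.
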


\begin{proof}
The operator $M^{-1}B(\hat{q}) + sU(\hat{q})I_n$ has eigenvalues
\[
\mu_j + sU(\hat{q}), \qquad j=1,\dots,k,
\]
each with multiplicity $\alpha_j$. Hence the inertia indices are determined entirely by the signs of these shifted eigenvalues. In particular, for any $s > 1$, all eigenvalues $\mu_j \geq - U(\hat{q})$ always contribute to the Morse coindex, while a change in the inertia occurs precisely when some $\mu_j + sU(\hat{q})$ crosses zero. \\
If $-\mu_i/U(\hat{q}) < s < -\mu_{i+1}/U(\hat{q})$, then $\mu_{i+1} + s U(\hat{q}) < 0 < \mu_i + s U(\hat{q})$ and the first $i + 1$ eigenvalues become positive by adding $s U(\hat{q})$. 
At $s=-\mu_i/U(\hat{q})$, the $i$-th eigenvalue vanishes, so its multiplicity $\alpha_i$ contributes to the nullity. 
Finally, when $s > -\mu_k/U(\hat{q})$, all shifted eigenvalues are strictly positive, so the coindex equals $n-1$ and both index and nullity vanish.
\end{proof}

This proposition shows that the Morse index $\iota^- = \iota^-_n + \iota^-_p$ of $\hat{q}$ jumps at precise values of $s$ that depend on the spectrum of $M^{-1} B(\hat{q})$ and the value of $U(\hat{q})$. Moreover, the normal nullity $\iota^0_n$ is equal to zero, except for finitely many values of $s$.

\section{Spectral flow and bifurcation results in finite dimension}
\label{sec_2.2}

The notion of spectral flow was introduced by Atiyah, Patodi and Singer~\cite{spectral_flow} in the study of elliptic operators on manifolds. 
It is a homotopy invariant of continuous paths of self-adjoint Fredholm operators on a real Hilbert space, and it counts the net number of eigenvalues crossing zero along the path. 
This quantity plays a central role in index theory, Morse theory on Hilbert manifolds, and bifurcation problems in infinite dimension (see, e.g.,~\cite{spec_flow_bifurcation, HuPortaluri2017_Index,HuPortaluri2019_Bifurcation, RobbinSalamon1995}). 
In this chapter we restrict to the finite-dimensional setting, and use the spectral flow of a family of Hessians to detect bifurcation points along trivial branches of critical points.

\medskip

Let $(E, \langle \cdot, \cdot \rangle)$ be an Euclidean space and $\mathcal{L}_\text{sym}(E)$ be the vector space of all linear self-adjoint operators in $E$. 

\begin{dfn} \label{dfn: sf}
The \emph{spectral flow} of a continuous path $L: [a, b] \rightarrow \mathcal{L}_\text{sym} (E)$ of self-adjoint operators with invertible endpoints is the integer

\begin{equation} \label{eqn: sf}
\text{sf} \, (L_t, t \in [a, b]) \defeq \iota^-(L_a) - \iota^-(L_b),
\end{equation}
where $\iota^-$ denotes the number of negative eigenvalues of the operator $L_t$. A path $L$ with invertible endpoints is said to be admissible.
\end{dfn}

\noindent In other words, the spectral flow represents the net change in the number of negative eigenvalues of $L(t)$ as $t$ runs from $a$ to $b$, as it quantifies the difference between the eigenvalues crossing $0$ from left to right and the eigenvalues crossing $0$ from right to left. 

\medskip

Given a continuous family of $\mathcal{C}^2$ functionals admitting a trivial branch of critical points, we consider the corresponding family of Hessians evaluated along this branch.
The non-vanishing of the spectral flow of these Hessians ensures the existence of bifurcations of nontrivial critical points. Additionally, it provides an estimate on the number of bifurcation points along the branch. In this context, we recall the geometric framework used in~\cite[Section 3]{APF}, that allows to extend this result to a one-parameter $\mathcal{C}^2$-family of functions defined on a finite-dimensional manifold.

Let $M$ be a finite-dimensional Riemannian manifold and let $\pi: M \to I$, with $I = [a, b]$, be a smooth submersion. For each $\lambda \in I$, the fiber
\[
M_\lambda \defeq \pi^{-1}(\lambda)
\]
is a smooth codimension-one submanifold of $M$, and its tangent space at $x \in M_\lambda$ is given by
\[
T_x M_\lambda = \ker D\pi_x.
\]
Collecting these tangent spaces defines the vertical tangent bundle
\[
T^v M = \left\{ \ker D\pi_x \; \middle| \; x \in M \right\} \subset TM.
\]
Let $F: M \to \R$ be a smooth function. For $\lambda \in I$, the restriction of $F$ to each fiber $M_\lambda$ gives a family of smooth functions $F_\lambda: M_\lambda \to \R$.
A smooth section $\gamma: I \to M$ is called a \emph{trivial branch of critical points} 
if $\gamma(\lambda)$ is a critical point of $F_\lambda$ for every $\lambda \in I$.

\begin{dfn}
A point $\lambda_\star \in I$ is a \emph{bifurcation instant} from the trivial branch 
$\gamma(I)$ if there exists a sequence $\lambda_n \to \lambda_\star$ and a sequence of critical points $x_n \in M_{\lambda_n}$ for $F_{\lambda_n}$ converging to $\gamma(\lambda_\star)$ such that $\pi(x_n) = \lambda_n$ and $x_n \notin \gamma(I)$.
\end{dfn}

\noindent For each $\lambda \in I$, let $H_\lambda$ denote the Hessian of $F_\lambda$ 
at $\gamma(\lambda)$. The family $(H_\lambda)_{\lambda \in I}$ defines a smooth function $h$ on the total space of the pullback bundle
\[
\mathcal{H} = \gamma^\ast (T^v M),
\]
By restricting $h$ to the fibers $T_{\gamma(\lambda)} M_\lambda$ of $\mathcal{H}$, we obtain a family of (generalized) quadratic forms $H_\lambda$ defined on $T_{\gamma(\lambda)} M_\lambda$.

We are now ready to state the following abstract bifurcation theorem, for the proof see~\cite[Theorem 4.4]{APF}. 

\begin{thm} 
\label{thm_bif_1}
Let $(H_\lambda)_{\lambda \in I}$, with $I=[a,b]$ be an admissible path of Hessians such that the spectral flow 
\[
\mathrm{sf}(H_\lambda, \lambda \in I) \neq 0.
\]
Then there exists at least one bifurcation instant $\lambda_\star \in (a,b)$ of critical points of $F$ from the trivial branch $\gamma(I)$. Moreover, if $\ker H_\lambda \neq \{0\}$ only for finitely many $\lambda$, then there are at least
\[
\left\lfloor \frac{\mathrm{sf}(H_\lambda, \lambda \in I)}{m} \right\rfloor
\]
distinct bifurcation instants in $(a, b)$, where $m \defeq \max \{\dim \ker H_\lambda \}$.
\end{thm}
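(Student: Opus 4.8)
The plan is to reduce the statement to a local problem near the trivial branch and then exploit the homotopy (continuation) invariance of the Conley index of the negative gradient flow. Since $\gamma$ is a section of the submersion $\pi$, the normal bundle of $\gamma(I)$ in $M$ is a vector bundle over the interval $I$, hence trivial; a tubular neighborhood theorem then provides a diffeomorphism of a neighborhood of $\gamma(I)$ onto $I \times B_\rho \subseteq I \times \R^N$, with $N = \dim M - 1$, taking $\gamma(\lambda)$ to $(\lambda,0)$ and each fiber $M_\lambda$ to $\{\lambda\} \times \R^N$. In these coordinates $F$ becomes a $\mathcal{C}^2$ family $F_\lambda \colon B_\rho \to \R$ with $\nabla_x F_\lambda(0) = 0$ for every $\lambda$, whose fiberwise Hessian at the origin is $H_\lambda$. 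The Riemannian metric turns the fiberwise differential into a gradient vector field and $H_\lambda$ into a self-adjoint operator; the Morse index $\iota^-(H_\lambda)$ and the spectral flow do not depend on this choice.

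For the existence of a bifurcation instant I would argue by contradiction. If no $\lambda_\star \in (a,b)$ is a bifurcation instant, then by definition every point of $[a,b]$ has a neighborhood on which the origin is the only critical point of $F_\lambda$; compactness of $[a,b]$ together with the global product chart yields a single radius $\delta \le \rho$ such that, for all $\lambda \in [a,b]$, the origin is the unique critical point of $F_\lambda$ in $\overline{B_\delta}$. Thus $\overline{B_\delta}$ is an isolating neighborhood, uniform in $\lambda$, whose maximal invariant set under the negative gradient flow of $F_\lambda$ is the rest point $\{0\}$. By the continuation property of the Conley index, $h(\{0\})$ is independent of $\lambda$. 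Admissibility forces the origin to be a hyperbolic rest point at $\lambda=a$ and $\lambda=b$, so there $h(\{0\})$ is the pointed sphere of dimension equal to the Morse index; hence $S^{\iota^-(H_a)} \simeq S^{\iota^-(H_b)}$ and $\iota^-(H_a) = \iota^-(H_b)$, i.e.\ $\mathrm{sf}(H_\lambda, \lambda \in I) = 0$, a contradiction. Running the same argument on a subinterval $[\lambda_\star - \epsilon, \lambda_\star + \epsilon]$ with $H_{\lambda_\star \pm \epsilon}$ invertible gives the local statement: if $\lambda_\star$ is not a bifurcation instant then $\mathrm{sf}(H_\lambda, \lambda \in [\lambda_\star - \epsilon, \lambda_\star + \epsilon]) = 0$.

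For the counting bound, suppose $\ker H_\lambda \neq \{0\}$ only at $\lambda_1 < \dots < \lambda_r$ in $(a,b)$; any bifurcation instant lies among these, since off this set $H_\lambda$ is invertible and the implicit function theorem excludes nearby nontrivial critical points. Pick disjoint intervals $[\lambda_j - \epsilon_j, \lambda_j + \epsilon_j]$ on whose complement $H$ remains invertible. Since $\mathrm{sf} = \iota^-(\text{initial}) - \iota^-(\text{final})$ is additive under subdivision at invertible points and vanishes on the ``gap'' subintervals, one gets $\mathrm{sf}(H_\lambda, \lambda \in I) = \sum_{j=1}^r \mathrm{sf}(H_\lambda, \lambda \in [\lambda_j - \epsilon_j, \lambda_j + \epsilon_j])$. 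Each summand has absolute value at most $\dim \ker H_{\lambda_j} \le m$, because for $\epsilon_j$ small only the eigenvalues that vanish at $\lambda_j$ can cross zero, and by the local statement it is zero unless $\lambda_j$ is a bifurcation instant. Hence $m$ times the number of bifurcation instants is at least $\lvert \mathrm{sf}(H_\lambda, \lambda \in I)\rvert$, which yields at least $\left\lfloor \mathrm{sf}(H_\lambda, \lambda \in I)/m \right\rfloor$ of them.

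The main obstacle is the local step — that a jump of the Morse index actually forces a bifurcation — and in particular getting past the limitations of degree theory. A Lyapunov--Schmidt reduction onto $\ker H_{\lambda_\star}$ followed by a Brouwer-degree computation recovers only the \emph{mod $2$} invariance of the Morse index, so it would detect a bifurcation only when the local spectral flow is odd; capturing the full spectral flow as an obstruction requires an invariant that remembers more than an Euler characteristic, which is precisely the role of the Conley index, whose homotopy type (not merely its characteristic) is continued along the path. The remaining points — making the isolating neighborhood uniform in $\lambda$, where compactness of $[a,b]$ and the section $\gamma$ enter, and the eigenvalue-splitting bound $\lvert \text{local } \mathrm{sf}\rvert \le m$ — are routine.
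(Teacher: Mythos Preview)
The paper does not prove this theorem: it explicitly refers the reader to \cite[Theorem~4.4]{APF} for the proof and only states the result. So there is no in-paper argument to compare against; what follows is an assessment of your proposal on its own terms.

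Your argument is correct and is one of the standard routes to this kind of result. The reduction to a product chart $I\times B_\rho$ via triviality of the normal bundle of the section $\gamma$ is the right first move. The contradiction step is sound: if no bifurcation occurs, compactness of $I$ gives a uniform isolating ball, and for a gradient flow the maximal invariant set in that ball is exactly the unique rest point $\{0\}$ (no nontrivial homoclinics for gradients); continuation of the Conley index then forces $S^{\iota^-(H_a)}\simeq S^{\iota^-(H_b)}$, contradicting $\mathrm{sf}\neq 0$. The counting part is also fine: additivity of $\mathrm{sf}$ at invertible points, vanishing on the gap intervals, the bound $\lvert\mathrm{sf}\rvert\le \dim\ker H_{\lambda_j}$ on each small interval, and the local ``no bifurcation $\Rightarrow$ local $\mathrm{sf}=0$'' together give the floor estimate.

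For context, the result cited from \cite{APF} sits in the tradition of \cite{spec_flow_bifurcation}, where bifurcation is detected directly via the spectral flow of the Hessian family (in infinite dimensions this uses the parity/topological-degree machinery of Fitzpatrick--Pejsachowicz--Recht rather than the Conley index). In finite dimensions your Conley-index proof is essentially equivalent and arguably more transparent: the Conley index of a hyperbolic rest point \emph{is} the pointed sphere of the Morse index, so continuation encodes exactly the integer jump that the spectral flow measures. Your closing remark about why a Lyapunov--Schmidt/Brouwer-degree argument only sees the parity is accurate and is precisely the reason one upgrades to a homotopy-type invariant.
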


The next theorem generalizes this result to the case where the family of Hessians $(H_\lambda)_{\lambda \in I}$ is singular for every $\lambda \in I$ and the kernel of $H_\lambda$ ``varies smoothly'' with respect to $\lambda$, as specified in the following theorem. This is the key tool for treating planar configurations which are Morse-Bott degenerate.

\begin{thm}
\label{thm_bif_gen}
Suppose that there exist $k$ smooth functions 
$\lambda \mapsto v_i(\lambda)$, $i = 1, \dots, k$, such that the vectors $\{v_1(\lambda), \dots, v_k(\lambda)\}$ are linearly independent for every $\lambda \in I$ and:
\begin{itemize}
\item for all $\lambda \in I$, 
$\mathrm{Span}\{ v_1(\lambda), \dots, v_k(\lambda) \} \subseteq \ker H_\lambda$,
\item for all $\lambda \in I \smallsetminus J$, 
$\ker H_\lambda = \mathrm{Span}\{ v_1(\lambda), \dots, v_k(\lambda) \}$,
\end{itemize}
where $J \subset I$ is finite. Denote by $K_\lambda = \mathrm{Span}\{ v_1(\lambda), \dots, v_k(\lambda) \}$. Let 
$W_\lambda = T_{\gamma(\lambda)} M_\lambda / K_\lambda$ and let 
$\bar{H}_\lambda = H_\lambda|_{W_\lambda}$ be the restriction of the Hessian 
at $\gamma(\lambda)$ to the quotient space. Then the path 
$(\bar{H}_\lambda)_{\lambda \in I}$ is degenerate only at finitely many values of $\lambda$. Moreover, if it is admissible and the spectral flow 
$\mathrm{sf}(\bar{H}_\lambda, \lambda \in I)$ is nonzero, then the conclusions of Theorem~\ref{thm_bif_1} hold.
\end{thm}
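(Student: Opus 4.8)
The plan is to reduce Theorem~\ref{thm_bif_gen} to Theorem~\ref{thm_bif_1} by passing to the quotient bundle whose fiber over $\lambda$ is $W_\lambda = T_{\gamma(\lambda)}M_\lambda / K_\lambda$, and then checking that (a) this quotient is a genuine smooth vector bundle, (b) the induced quadratic forms $\bar H_\lambda$ are well-defined, and (c) the relevant bifurcation data is faithfully transported. First I would verify that $K_\lambda$ is a smooth subbundle of $\gamma^\ast(T^vM)$: since the $v_i(\lambda)$ are smooth and pointwise linearly independent on the compact interval $I$, they form a smooth frame for $K_\lambda$, so the quotient $\mathcal{W} = \gamma^\ast(T^vM)/K$ is a smooth rank-$(\dim M_\lambda - k)$ vector bundle over $I$ (over an interval it is in fact trivial, which we may use to talk about a single Euclidean space if convenient). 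Next, because $K_\lambda \subseteq \ker H_\lambda$ for all $\lambda$, the quadratic form $H_\lambda$ descends to a well-defined quadratic form $\bar H_\lambda$ on $W_\lambda$: if $w \in K_\lambda$ then $H_\lambda(v+w, \cdot) = H_\lambda(v,\cdot)$ for every $v$, since $K_\lambda$ lies in the radical of $H_\lambda$. Smoothness of $\lambda \mapsto \bar H_\lambda$ follows from smoothness of $h$ together with smoothness of the frame $v_i(\lambda)$ and of a complementary frame (extend $v_1,\dots,v_k$ to a smooth frame of $T^vM$ along $\gamma$, possible on an interval).

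The second step is the degeneracy count. I claim $\ker \bar H_\lambda \cong \ker H_\lambda / K_\lambda$ as vector spaces, via the natural projection. Indeed, picking any inner product on the fibers, write $H_\lambda$ via its representing self-adjoint operator; the radical of $\bar H_\lambda$ on $W_\lambda$ is exactly the image in $W_\lambda$ of $\{v : H_\lambda(v, T_{\gamma(\lambda)}M_\lambda) = 0\} = \ker H_\lambda$, and since $K_\lambda \subseteq \ker H_\lambda$ the kernel of $\bar H_\lambda$ has dimension $\dim\ker H_\lambda - k$. By hypothesis, for $\lambda \in I\setminus J$ we have $\ker H_\lambda = K_\lambda$, so $\dim\ker\bar H_\lambda = 0$; hence $\bar H_\lambda$ is non-degenerate off the finite set $J$, which gives the first assertion of the theorem. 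In particular the endpoints of $I$ can be arranged to be non-degenerate (shrinking or perturbing $I$ within the admissibility hypothesis), so $(\bar H_\lambda)_{\lambda\in I}$ is an admissible path and $\mathrm{sf}(\bar H_\lambda,\lambda\in I)$ is defined via Definition~\ref{dfn: sf}.

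The third step is to relate bifurcation instants for the reduced family back to $F$. Here I would argue that $\bar H_\lambda$ is, up to the smooth change of frame constructed above, the Hessian along $\gamma$ of the family $F_\lambda$ viewed on the quotient — more precisely, the directions in $K_\lambda$ contribute nothing to the quadratic form and can be split off by a Morse–Bott-type normal form, so that the non-vanishing of $\mathrm{sf}(\bar H_\lambda,\lambda\in I)$ forces, through Theorem~\ref{thm_bif_1} applied to the reduced setting, the existence of at least one $\lambda_\star\in(a,b)$ at which $\bar H_\lambda$ is degenerate in an ``essential'' way, i.e. $\ker H_\lambda$ strictly exceeds $K_\lambda$, and one then promotes this to a genuine bifurcation instant of critical points of $F_{\lambda}$ exactly as in the proof of Theorem~\ref{thm_bif_1} (Lyapunov–Schmidt reduction on the $W_\lambda$-directions, using that along $K_\lambda$ the branch is locally a product). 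The multiplicity estimate $\lfloor \mathrm{sf}(\bar H_\lambda)/m\rfloor$ with $m = \max\dim\ker\bar H_\lambda$ follows verbatim from the second part of Theorem~\ref{thm_bif_1}, since off $J$ the reduced path is non-degenerate.

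The main obstacle I anticipate is the third step: making rigorous the claim that a bifurcation of critical points of the \emph{reduced} problem is equivalent to a bifurcation of $F$ itself, rather than an artifact of the quotient. One must ensure that when $\ker H_{\lambda_\star}\supsetneq K_{\lambda_\star}$, the extra kernel directions are transverse to the $SO(2)$-orbit (in the application) and genuinely produce nearby critical points of $F_{\lambda_n}$ not lying on $\gamma(I)$, rather than merely reparametrizing the orbit. The clean way to handle this is to observe that, by the hypothesis that $v_1(\lambda),\dots,v_k(\lambda)$ span a smooth subbundle of $\ker H_\lambda$ that one can exponentiate to a foliation by ``trivial'' directions (in the balanced-configuration case, the orbit of the symmetry group), the Lyapunov–Schmidt reduction of $F$ near $\gamma(\lambda_\star)$ splits off precisely the $K$-directions, leaving a reduced function whose critical points correspond to those of $\bar H$; any degeneracy of $\bar H$ beyond what $J$ accounts for then feeds bifurcating branches back to $F$. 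This is the step where the smoothness and linear-independence hypotheses on the $v_i$ are genuinely used.
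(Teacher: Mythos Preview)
Your proposal is correct and follows essentially the same route as the paper: extend the smooth frame $v_i(\lambda)$ to a full frame of $T_{\gamma(\lambda)}M_\lambda$ (the paper does this via an orthonormal completion with respect to the Riemannian metric), write the representing self-adjoint operator in block-diagonal form with a zero $k\times k$ block on $K_\lambda$, observe that the reduced path $\bar H_\lambda$ is degenerate only on the finite set $J$, and invoke Theorem~\ref{thm_bif_1}. The obstacle you flag in your third step --- lifting a bifurcation of the reduced family back to genuine critical points of $F$ --- is in fact dispatched in the paper by a one-line appeal to Theorem~\ref{thm_bif_1} without any explicit Lyapunov--Schmidt argument, so on that point you are being more scrupulous than the original.
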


\begin{proof}
Let $\{v_1(\lambda), \dots, v_k(\lambda)\}$ be a smooth family of vectors spanning 
the subspace $K_\lambda$, where
\begin{itemize}
\item for $\lambda \in I$, $K_\lambda \subseteq \ker H_\lambda$,
\item for $\lambda \in I \smallsetminus J$, $K_\lambda = \ker H_\lambda$.
    
\end{itemize}

\noindent
For each $\lambda \in I$, extend $\{v_1(\lambda), \dots, v_k(\lambda)\}$ to an orthonormal basis 
\[
\{v_1(\lambda), \dots, v_k(\lambda), w_{k+1}(\lambda), \dots, w_n(\lambda)\}
\]
of $T_{\gamma(\lambda)} M_\lambda$ with respect to the Riemannian metric on $M$.

Let $L_\lambda$ be the self-adjoint operator associated with the quadratic form $H_\lambda$,
so that for all $u \in T_{\gamma(\lambda)} M_\lambda$,
\[
H_\lambda(u) = \langle L_\lambda u, u \rangle.
\]
In the orthonormal basis above, $L_\lambda$ takes the block-diagonal form
\[
L_\lambda = 
\begin{pmatrix}
\mathbb{O}_k & 0 \\
0 & \bar{L}_\lambda
\end{pmatrix},
\]
where
\begin{itemize}
    \item $\mathbb{O}_k$ is the $k \times k$ zero matrix corresponding to $K_\lambda$, since 
    $K_\lambda \subseteq \ker H_\lambda$,
    \item $\bar{L}_\lambda$ is a self-adjoint operator acting on the quotient space 
    $W_\lambda = T_{\gamma(\lambda)} M_\lambda / K_\lambda$, representing the restriction of $H_\lambda$ to $W_\lambda$.
\end{itemize}

\noindent
Thus, the bifurcation problem reduces to studying the family of quadratic forms
\[
\bar{H}_\lambda \defeq H_\lambda|_{W_\lambda}.
\]
The smooth dependence of $\lambda \mapsto \bar{L}_\lambda$ implies that $\bar{H}_\lambda$ 
is degenerate only at isolated points, i.e.\ for finitely many $\lambda \in J$. 
If the path $(\bar{H}_\lambda)_{\lambda \in I}$ is admissible and the spectral flow 
$\mathrm{sf}(\bar{H}_\lambda, \lambda \in I)$ is nonzero, then the conclusions of 
Theorem~\ref{thm_bif_1} apply.
\end{proof}

\begin{rmk}
A particular case of this theorem is when the subspace $K_\lambda$ remains constant for all $\lambda \in I$. In the next section, we will use this specific case to establish the existence of bifurcations for planar balanced configurations.
\end{rmk}

\section{Bifurcations of planar balanced configurations}
\label{sec_2.3}

We now combine the inertia index analysis of Section~\ref{sec_2.1} with the spectral flow framework of Section~\ref{sec_2.2} to establish bifurcations along trivial branches of planar balanced configurations in $\{0\}\times\R^2$. Recall that such configurations are central configurations and solve the balanced configuration equation~\eqref{eqn_SBC} for all $s>1$.

\begin{thm}
\label{thm_bif}
Let $s_1, s_2 > 1$ and set $I \defeq [s_1, s_2]$. 
Fix a planar balanced configuration $\hat{q}$ lying in the $\{0\} \times \R^2$-plane, 
and consider the associated trivial family $(\hat{q}_s)_{s \in I}$ defined by 
$\hat{q}_s = \hat{q}$ for all $s \in I$. 
If $s_1$ is chosen sufficiently close to $1$ and $s_2$ is taken large enough, then, with
\[
\alpha \defeq \sum_{j=1}^{l} \alpha_j, 
\qquad 
\beta \defeq \max \{ \alpha_i \mid i = l+1, \dots, k \},
\]
there exist at least
\[
\left\lfloor \frac{n - 1 - \alpha}{\beta} \right\rfloor
\]
distinct bifurcation instants branching from $\hat{q}$.
\end{thm}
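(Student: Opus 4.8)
The plan is to realize the constant family $(\hat q_s)_{s\in I}$, $\hat q_s\equiv\hat q$, as a trivial branch of critical points of $U|_{\hat{\s}}$ for the fibration over $I$ of Section~\ref{sec_2.2}, and then to invoke the constant-kernel case of Theorem~\ref{thm_bif_gen}. First I would check that this is indeed a trivial branch: since $\hat q$ lies in the plane $\{0\}\times\R^2\subset\R^3$ of the reduced setting of Section~\ref{sec_2.1}, all its $e_1$-components vanish, so $\hat S\hat q=\hat q$ and $\abs{\hat q}^2_S$ is independent of $s$; hence $\hat q\in\hat{\s}$ for every $s>1$ and, by~\eqref{eqn_SBC_n}, $\hat q$ is a critical point of $U|_{\hat{\s}}$ with $\lambda=U(\hat q)$ for all $s>1$. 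The corresponding Hessians $H_s=H(\hat q)$ split, in the coordinates of Section~\ref{sec_2.1}, as the normal block $M^{-1}B(\hat q)+sU(\hat q)\mathbb{I}_n$ and the planar block $M^{-1}D(\hat q)+U(\hat q)\mathbb{I}_{2n}$, the latter being independent of $s$.

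The second step is to produce the constant degenerate subspace. Let $K\subseteq T_{\hat q}\hat{\s}$ be the kernel of the (fixed) planar block; it has dimension $\iota^0_p(\hat q)\ge 1$ and is independent of $s$, so any fixed basis of $K$ supplies the smooth (in fact constant) vectors required in Theorem~\ref{thm_bif_gen}. By Proposition~\ref{thm:Morse} the normal nullity $\iota^0_n(\hat q)$ vanishes for every $s>1$ outside the finite set $J\defeq\{-\mu_i/U(\hat q)\mid i=l+1,\dots,k\}\cap I$ and equals $\alpha_i$ at $s=-\mu_i/U(\hat q)$. Hence $K\subseteq\ker H_s$ for all $s\in I$ with equality for $s\in I\setminus J$, so the hypotheses of Theorem~\ref{thm_bif_gen} hold with $K_s\equiv K$. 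Passing to the quotient $W_s=T_{\hat q}\hat{\s}/K$, the reduced path $\bar H_s=H_s|_{W_s}$ is degenerate exactly on $J$, with $\dim\ker\bar H_s=\alpha_i$ at $s=-\mu_i/U(\hat q)$; as long as $I$ contains $J$, this yields $m\defeq\max_{s\in I}\dim\ker\bar H_s=\max\{\alpha_i\mid i=l+1,\dots,k\}=\beta$.

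Next I would fix the endpoints and compute the spectral flow. Since $k>l$ there is an eigenvalue $\mu_{l+1}<-U(\hat q)$, so $-\mu_{l+1}/U(\hat q)>1$ and one may take $s_1\in(1,-\mu_{l+1}/U(\hat q))$ and $s_2>-\mu_k/U(\hat q)$; then $J\subset(s_1,s_2)$, and $\iota^0_n(\hat q)=0$ at both endpoints by Proposition~\ref{thm:Morse}(i) (with $i=l$) and~(iii), so, the planar part of $\bar H_s$ being nondegenerate, $\bar H_{s_1}$ and $\bar H_{s_2}$ are invertible and the path is admissible. Because the planar block of $\bar H_s$ does not depend on $s$, the Morse index splits as $\iota^-(\bar H_s)=\iota^-_n(\hat q)+c$ with $c$ the ($s$-independent) planar contribution; by Proposition~\ref{thm:Morse}(i) one has $\iota^-_n(\hat q)=n-1-\alpha$ at $s=s_1$ and by~(iii) $\iota^-_n(\hat q)=0$ at $s=s_2$, so
\[
\mathrm{sf}(\bar H_s,\,s\in I)=\iota^-(\bar H_{s_1})-\iota^-(\bar H_{s_2})=(n-1-\alpha)-0=n-1-\alpha,
\]
where $\alpha=\sum_{j=1}^{l}\alpha_j$. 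Since $n-1-\alpha=\sum_{j=l+1}^{k}\alpha_j$ is the total multiplicity of the eigenvalues of $M^{-1}B(\hat q)$ lying below $-U(\hat q)$, and $k>l$, the spectral flow $n-1-\alpha\ge k-l\ge 1$ is a positive integer, in particular nonzero.

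Finally, Theorem~\ref{thm_bif_gen} applies: the reduced path $(\bar H_s)_{s\in I}$ is admissible, degenerate only at the finitely many points of $J$, and has nonzero spectral flow, so the conclusions of Theorem~\ref{thm_bif_1} hold and give at least $\lfloor\mathrm{sf}(\bar H_s,\,s\in I)/m\rfloor=\lfloor(n-1-\alpha)/\beta\rfloor$ distinct bifurcation instants in $(s_1,s_2)$; these are balanced configurations solving~\eqref{eqn_SBC} that bifurcate from $\hat q$ (occurring in $S^1$-families by the residual rotational symmetry). The step I expect to require the most care is the verification of the hypotheses of Theorem~\ref{thm_bif_gen} — namely that $K$ is a constant (hence trivially smooth) subspace of the fixed tangent space $T_{\hat q}\hat{\s}$ and that $\ker H_s=K$ off the finite set $J$, which is exactly where the $s$-independence of the planar block and Proposition~\ref{thm:Morse} enter — together with the bookkeeping that places $s_1,s_2$ outside $J$ and identifies $m$ with $\beta$; once these are in place, the spectral flow computation and the application of Theorem~\ref{thm_bif_gen} are immediate.
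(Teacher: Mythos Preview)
Your proposal is correct and follows essentially the same approach as the paper: identify the constant kernel $K$ coming from the $s$-independent planar block, apply the constant-kernel case of Theorem~\ref{thm_bif_gen} with the finite degeneracy set $J=\{-\mu_i/U(\hat q)\mid i=l+1,\dots,k\}$, choose $s_1\in(1,-\mu_{l+1}/U(\hat q))$ and $s_2>-\mu_k/U(\hat q)$, and compute $\mathrm{sf}(\bar H_s)=n-1-\alpha$ via Proposition~\ref{thm:Morse}. If anything, your write-up is slightly more careful than the paper's in explicitly verifying that $\hat q\in\hat{\s}$ for all $s$, tracking the constant planar contribution $c$ to the Morse index, and checking that the spectral flow is strictly positive.
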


\begin{proof}
Let $\hat{\mathcal{S}}_s$ denote the balanced configuration space corresponding to the parameter $s$. 
Define the total space
\[
\mathcal{E} \defeq I \times \hat{\mathcal{S}}_s,
\]
which is the trivial bundle over $I$ with fiber $\hat{\mathcal{S}}_s$. 
We denote by $\pi: \mathcal{E} \to I$ the canonical projection, so that 
$\pi^{-1}(s) = \hat{\mathcal{S}}_s$.
Restricting the Newtonian potential $U$ to $\mathcal{E}$ yields a smooth bundle map 
$\mathcal{U}: \mathcal{E} \to \R$, whose fiberwise restriction is 
$\mathcal{U}_s = U|_{\hat{\mathcal{S}}_s}$.

Let $H_s$ be the quadratic form given by the Hessian of $U$ at $\hat{q}_s$. 
Because of rotational symmetry, $H_s$ is degenerate for every $s \in I$. 
From the results of Section~\ref{sec_2.1}, the kernel of $H_s$ is constant 
for all but finitely many values of $s$, and it coincides with the kernel of the 
planar Hessian 
\[
M^{-1} D^2 U(\hat{q}) + s\, U(\hat{q}) \, I_{2n}.
\]
Denote this kernel by $K$. Thus,
\begin{itemize}
\item for $s \in I$, one has $K \subseteq \ker H_s$,
\item for $s \in I \smallsetminus J$, one has $\ker H_s = K$,
\end{itemize}
where $J \subset I$ is a finite set consisting of the parameter values 
\[
 - \frac{\mu_i}{U(\hat{q})}, 
\qquad i = l+1, \dots, k.
\]
Passing to the quotient space $T_{\hat{q}_s}\hat{\mathcal{S}}_s / K$, we obtain the restricted Hessians 
$\bar{H}_s$. By construction, $\bar{H}_s$ is non-degenerate except when $s \in J$.  
Now choose $s_1, s_2 \in I$ satisfying
\[
1 < s_1 < -\frac{\mu_{l+1}}{U(\hat{q})}, 
\qquad 
s_2 > -\frac{\mu_k}{U(\hat{q})}.
\]
For this choice, the path $s \mapsto \bar{H}_s$ is admissible. 
Applying part (i) of Proposition~\ref{thm:Morse} with $i = l$ gives
\[
\iota^-_n(\hat{q}_{s_1}) = n - 1 - \alpha,
\]
while part (iii) yields
\[
\iota^-_n(\hat{q}_{s_2}) = 0.
\]
Hence the spectral flow along $s \in I$ is
\[
\mathrm{sf}\, (H_s, s \in I) 
= \iota^-_n(H_{s_1}) - \iota^-_n(H_{s_2}) 
= n - 1 - \alpha.
\]
Since the kernel of $\bar{H}_s$ is non-trivial only at eigenvalues $\mu_i$ for $i \in \{l+1, \dots, k\}$, each with multiplicity $\alpha_i$, Theorem \ref{thm_bif_gen} applies. 
\end{proof}

\begin{rmk}
Theorem~\ref{thm_bif} applies whether the planar balanced configuration $\hat{q}$ is degenerate or Morse-Bott non-degenerate. In the case of Morse-Bott non-degenerate planar balanced configurations, where the planar nullity is always $1$, it is sufficient to consider the quotient space $\bar{\mathcal{S}} = (\hat{\mathcal{S}} \smallsetminus \mathcal{C}_\mathrm{coll}) / SO(2)$, where $\mathcal{C}_\mathrm{coll}$ denotes the manifold of collinear configurations contained in $\R \times \{ 0 \}$. In this case, planar balanced configuration restricted to the quotient manifold become non-degenerate critical point and it is sufficient to apply Theorem~\ref{thm_bif_1}.
\end{rmk}

\section{Numerical simulations for planar balanced configurations}
\label{section_6}

In this section, we present numerical examples of non-trivial branches of balanced configurations that bifurcate from planar central configurations. The simulations were obtained using a continuation method based on a program originally developed by Fenucci in~\cite{APF} for collinear central configurations in the three-body problem and adapted here to the planar setting.

\medskip

The continuation method~\cite{Allgower1990} is a numerical technique for tracing families of solutions of parametric nonlinear systems of the form 
\[
F(q, s) = 0,
\]
where $q \in \mathbb{R}^d$ and $s \in \mathbb{R}$. Starting from a known solution $(q_i, s_i)$, the goal is to compute new solutions for different values of the parameter $s$. At each step, we displace $(q, s)$ by a step $\delta$ and solve the system
\[
\begin{cases}
F(q, s) = 0, \\
\lvert (q, s) - (q_i, s_i) \rvert^2 - \delta^2 = 0,
\end{cases}
\]
using a damped Newton’s method, whose Jacobian has the form
\[
\begin{pmatrix}
\dfrac{\partial F}{\partial q} & \dfrac{\partial F}{\partial s} \\
2(q - q_i) & 2(s - s_i)
\end{pmatrix}.
\]
An initial guess is obtained by starting from the known solution $(q_i, s_i)$ and taking  a tangent displacement along the curve of solutions.

In our case, the system is given by the balanced configuration equation 
\[
F(q, s) = M^{-1} \nabla U(q) + U(q) \hat{S}(s) q.
\]
Starting from a planar central configuration $\hat{q}$, which produces a trivial branch of solutions, we compute the parameters $\tilde{s}$ at which the configuration becomes degenerate, and hence bifurcations arise. We then displace $\tilde{s}$ by setting $s = \tilde{s} + \Delta s$ and compute a new solution using Newton’s method. An initial guess is constructed by displacing $\hat{q}$ along the direction of the kernel of $\frac{\partial F}{\partial q}$. These two solutions are then used to initialize the continuation method.

\subsection{Examples for $n = 4$}

We first consider four equal unit masses at the vertices of a square. The trivial branch of planar balanced configurations undergoes a bifurcation at $\bar{s} = 1.4$, from which a non-trivial branch emerges. Along this branch, the parameter $s$ decreases, while the masses move to form the vertices of a tetrahedron. At $s = 1$, the configuration becomes a regular tetrahedron, corresponding to a spatial central configuration for the four-body problem (see Figure~\ref{fig_1}). All configurations along this branch are local minima of the potential function $U|_\s$, whereas the initial planar configuration is a saddle point.

\begin{figure}[h!]
    \centering
    \includegraphics[width=.4\textwidth]{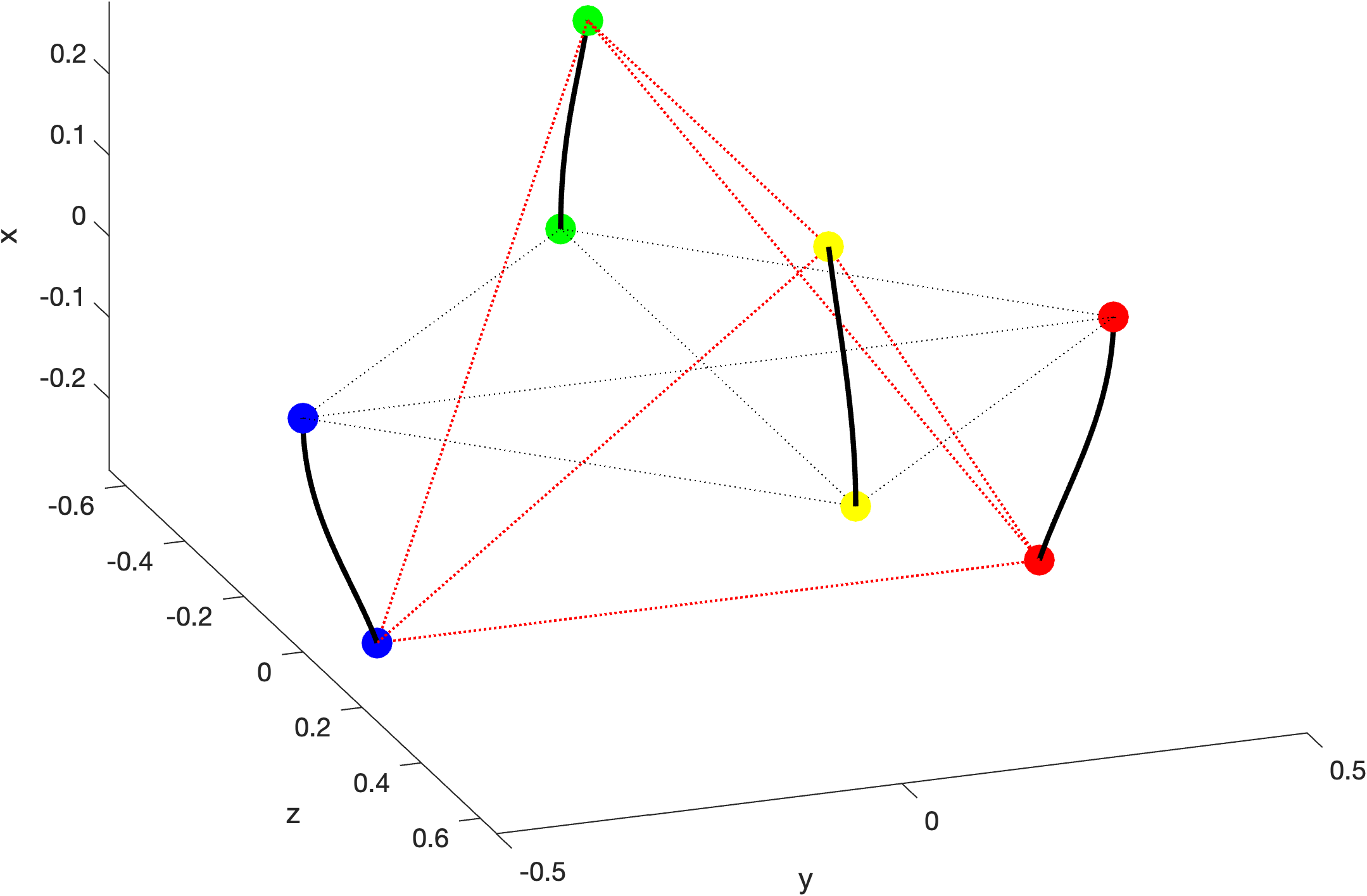}
    \caption{Non-trivial branch of balanced configurations bifurcating from the square configuration with four equal masses. 
    As $s$ decreases from $\bar{s}=1.4$, the square deforms into a tetrahedron, which becomes regular at $s=1$. 
    All configurations along this branch are minima of $U|_\s$.}
    \label{fig_1}
\end{figure}

\noindent A second example starts with three unit masses at the vertices of an equilateral triangle and another unit mass at the barycenter. The non-trivial branch originates at $\bar{s} = 2.5$ (see Figure~\ref{fig_2}). In this case, all configurations along the branch are saddle points of $U|_\s$.  
By varying the mass at the center while keeping the other three equal to $1$, we observe a critical value
\[
m^*= \frac{2 + 3 \sqrt{3}}{18 - 5 \sqrt{3}},
\]
at which the initial planar configuration becomes degenerate. For $m_4 < m^*$, the solutions are minima of $U|_\s$. As $m_4$ increases and approaches $m^*$, the solutions transition from minima to saddle points, and then back to minima near $s = 1$. At these transitions, the system’s stability changes, and new branches of solutions should arise, though we were unable to identify them numerically. For $m_4 > m^*$, the solutions begin as saddle points and later become minima. 

\begin{figure}[h!]
    \centering
    \includegraphics[width=.4\textwidth]{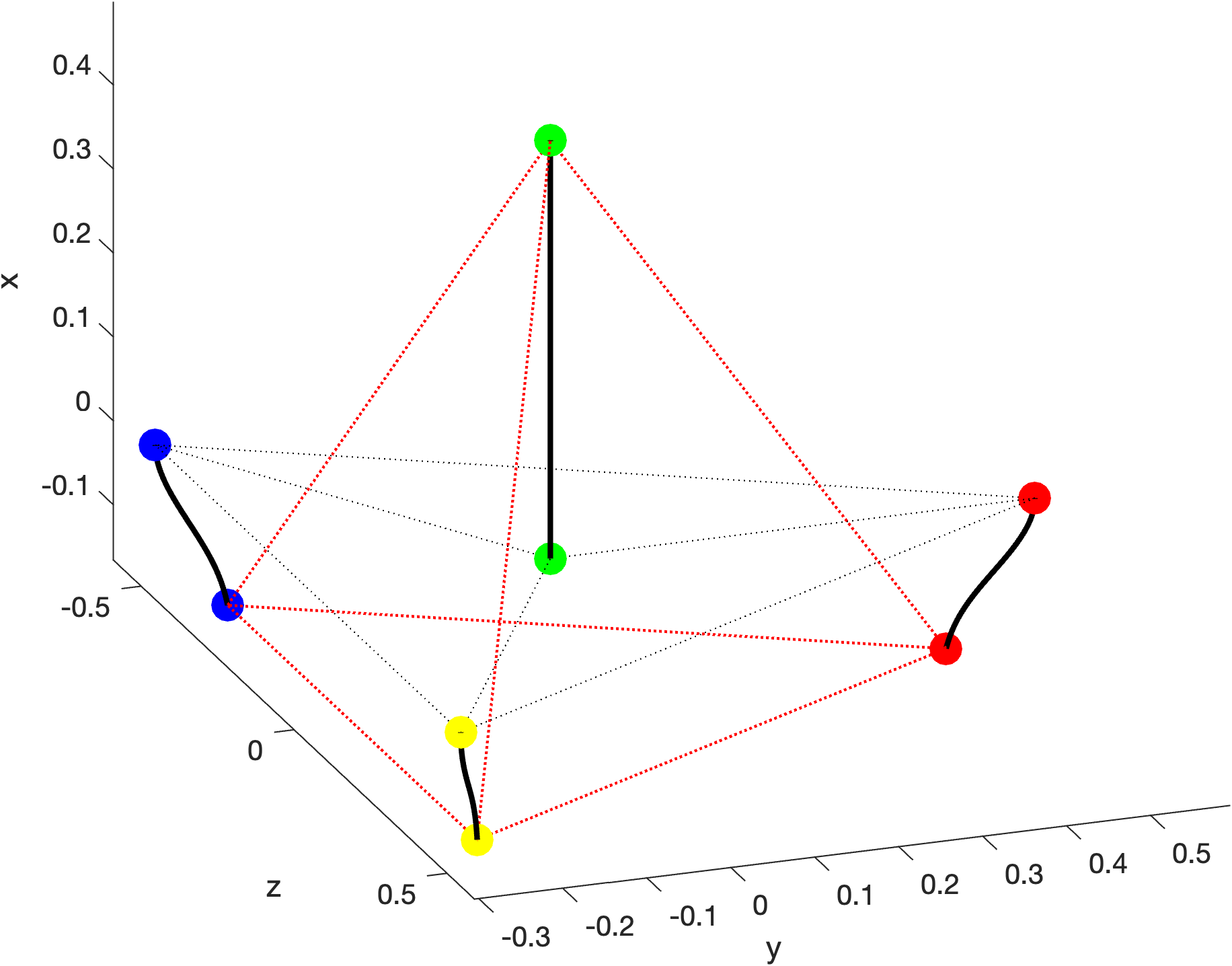}
    \caption{Non-trivial branch bifurcating from the triangular configuration with a central mass. 
    The branch originates at $\bar{s}=2.5$. 
    For equal masses, all solutions along the branch are saddles. 
    Varying the central mass reveals a critical value $m^*$ at which the stability type changes.}
    \label{fig_2}
\end{figure}

\subsection{Examples for $n = 5$}

We now place four unit masses at the vertices of a square and one unit mass at the barycenter. In this case, there are two bifurcation points: $s_1 = 2.5$ and $s_2 = 1.2$.  
From the first bifurcation point, a branch emerges along which the parameter $s$ decreases. At $s = 1$, the masses form the vertices of a regular square pyramid (see Figure~\ref{fig:figure1}). These configurations are local minima of $U|_\s$.  
From the second bifurcation point, as $s$ decreases, the configuration at $s = 1$ becomes a tetrahedron with $m_5$ at the barycenter (see Figure~\ref{fig:figure2}). These are saddle points of $U|_\s$.  
Furthermore, there exists a critical value $m^*$ of $m_5$ above which the nature of the configurations changes. For $m_5 < m^*$, the configurations along the first non-trivial branch are initially local minima but eventually transition into saddle points. For $m_5 > m^*$, these configurations are always saddle points. The second branch is unaffected by this change.

\begin{figure}[h!]
    \centering
    \begin{minipage}{0.49\textwidth}
        \centering
        \includegraphics[width=\textwidth]{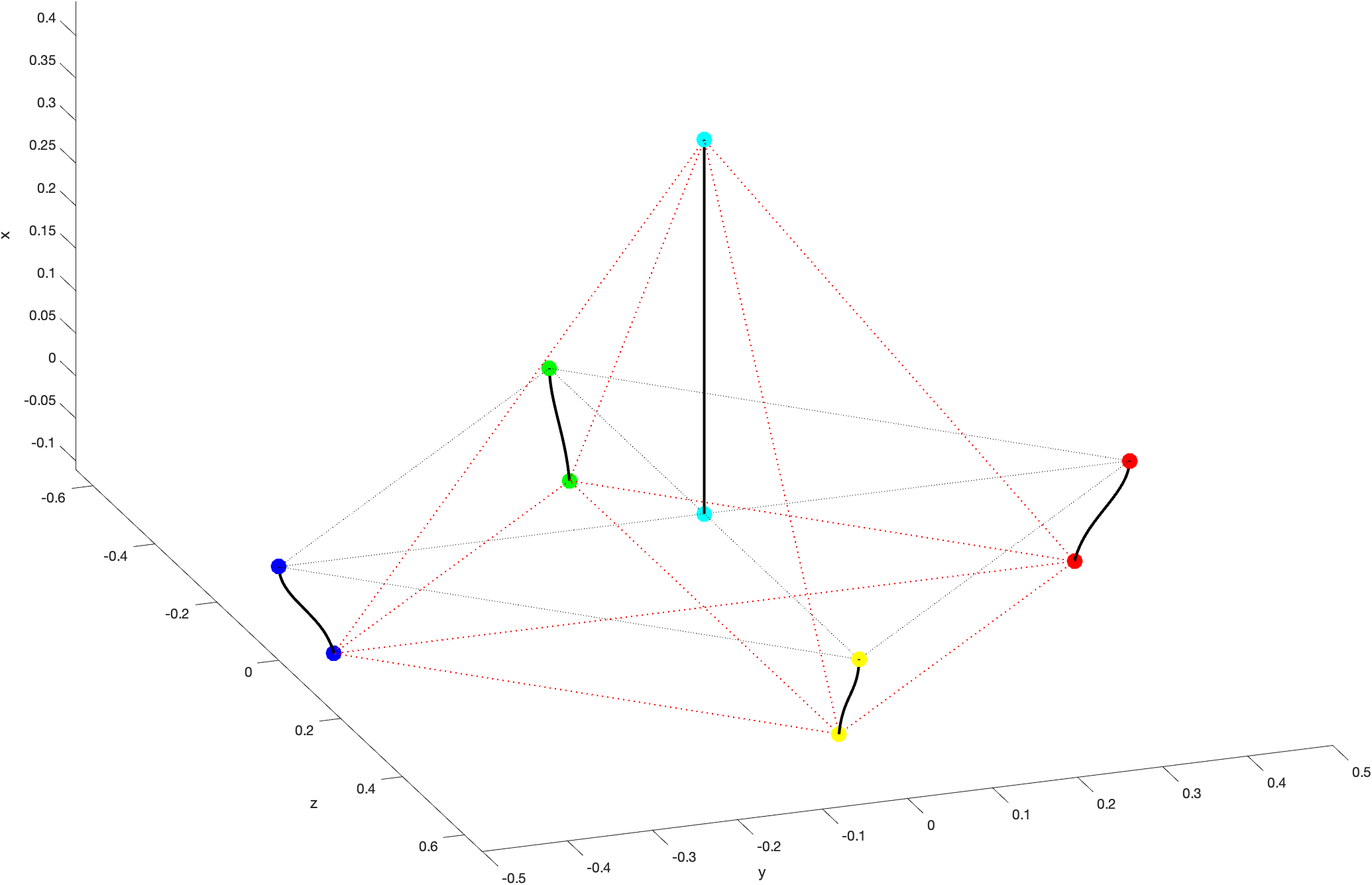}
        
        \caption{Non-trivial branch for five equal masses originating at $s_1 = 2.5$. 
        As $s$ decreases, the configuration evolves from a square with a central mass into a regular square pyramid at $s=1$. 
        These solutions are minima of $U|_\s$.}
        \label{fig:figure1}
    \end{minipage} \hfill
    \begin{minipage}{0.49\textwidth}
        \centering
        \vspace{0.6cm}
        \includegraphics[width=\textwidth]{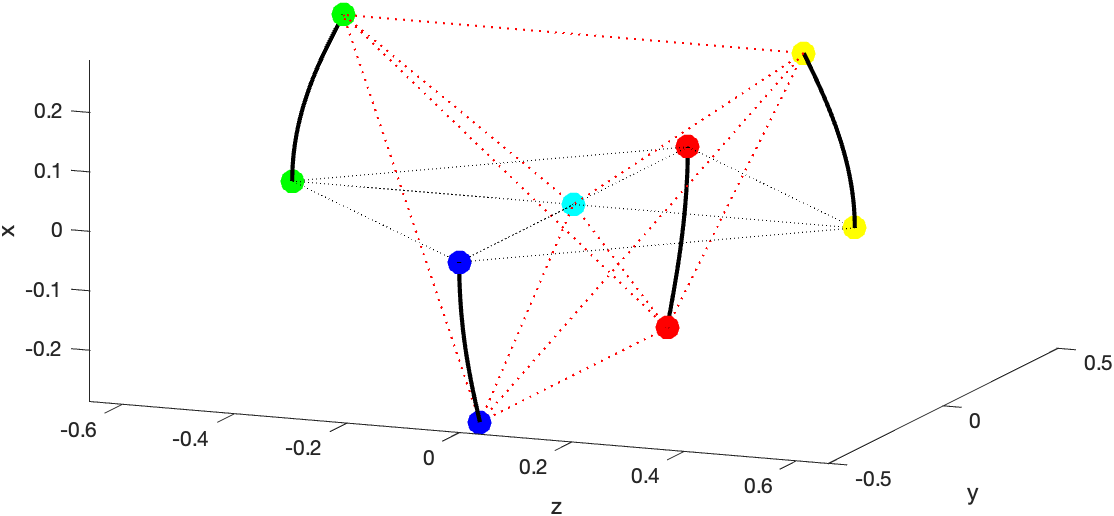} 
        \vspace{0.1cm}
        \caption{Non-trivial branch for five equal masses originating at $s_2 = 1.2$. 
        As $s$ decreases, the configuration becomes a tetrahedron with the fifth mass at the barycenter. 
        These solutions are saddles of $U|_\s$.}
        \label{fig:figure2}
    \end{minipage}
\end{figure}

\section{Numerical simulations for collinear balanced configurations}

We conclude by presenting numerical examples of bifurcations in collinear central configurations for the four-body problem. In~\cite{APF}, numerical simulations were carried out for the three-body case; here, we extend that work to four bodies.
Recall that for this problem we work in $\R^2$ with
\[
S = \mathrm{diag}(s,1).
\]
Collinear central configurations on $\{0\} \times \R$ are solutions of the balanced configuration equation for all $s > 1$, forming a trivial branch of solutions. For each choice of mass ordering along the $y$-axis, there are at least $n - 2$ bifurcation instants. In particular, for $n=4$ there are always at least two.

\medskip

In the first example, see Figure~\ref{fig_4}, we assume that all masses are equal to $1$. There are two bifurcation instants, $s_1 = 4.15$ and $s_2 = 2.4$. Along the non-trivial branch originating from the first bifurcation, the parameter $s$ initially decreases until it reaches a turning point, where the masses form the vertices of a rhombus. Beyond this point, $s$ increases again until reaching another turning point, where the masses realign along the line, but with the two central masses swapped.

\begin{figure}[h!]
    \centering
    \includegraphics[width=.35\textwidth]{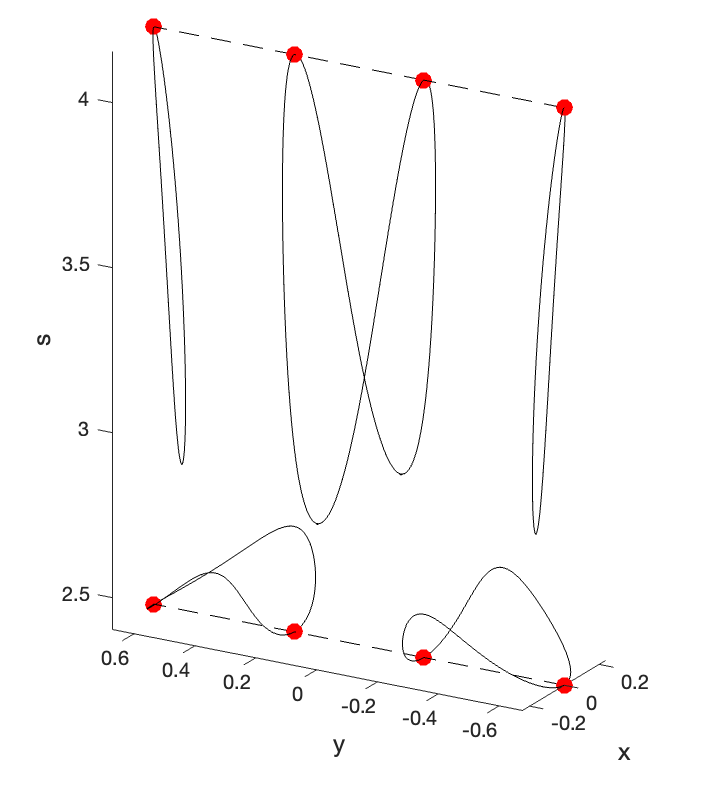}
    \small{\caption{Non-trivial branches for four equal masses. 
    The branch from $s_1=4.15$ passes through a rhombus configuration before returning to a collinear alignment, with the two central masses exchanged; all solutions are minima. 
    The branch from $s_2=2.4$ also returns to a collinear alignment but with both pairs swapped; all solutions are saddles.}
    \label{fig_4}}
\end{figure}

\noindent In the plane $x\hat{O}y$, the two central masses appear to rotate around a common point, while the external masses oscillate. All these solutions are local minima of $U|_\s$. Since the stability does not change after the turning point, we expect additional branches to bifurcate from this turning point, though we were unable to identify them numerically.
Along the second non-trivial branch, the parameter $s$ first increases, reaches a turning point, and then starts decreasing until another turning point is reached. At this point, the masses realign along the line, but with the first and second masses swapped, as well as the third and fourth masses. These solutions are all saddle points of $U|_\s$.

We now present some examples where the last two masses are set to $1$ (represented by the red dots in the figures), while the first two masses are equal to $\alpha < 1$ (represented by the blue dots). 
For $\alpha = 0.2$, we observe two bifurcation points, $s_1 = 4.9$ and $s_2 = 1.9$. Along the non-trivial branch originating from the first bifurcation point, we identify two turning points. The smaller masses continue to rotate around a common center, while the unitary masses oscillate (see Figure~\ref{fig_5}). At the first turning point, a new non-trivial branch detaches, along which the parameter $s$ decreases until it reaches another turning point, and then approaches a limit configuration as $s \to +\infty$. The solutions along the original non-trivial branch are minima, while those on the branch originating from the turning point are saddles. For homological reasons, we also expect another branch to detach from the turning point, along which the parameter $s$ increases. 
Along the branch originating from the second bifurcation point, the parameter $s$ decreases until a turning point is reached, and then approaches a limit configuration as $s \to +\infty$ (see Figure~\ref{fig_6}). All the solutions along this branch are saddle points.

\begin{figure}[h!]
    \centering
    \begin{minipage}{0.49\textwidth}
        \centering
        \includegraphics[width=0.7\textwidth]{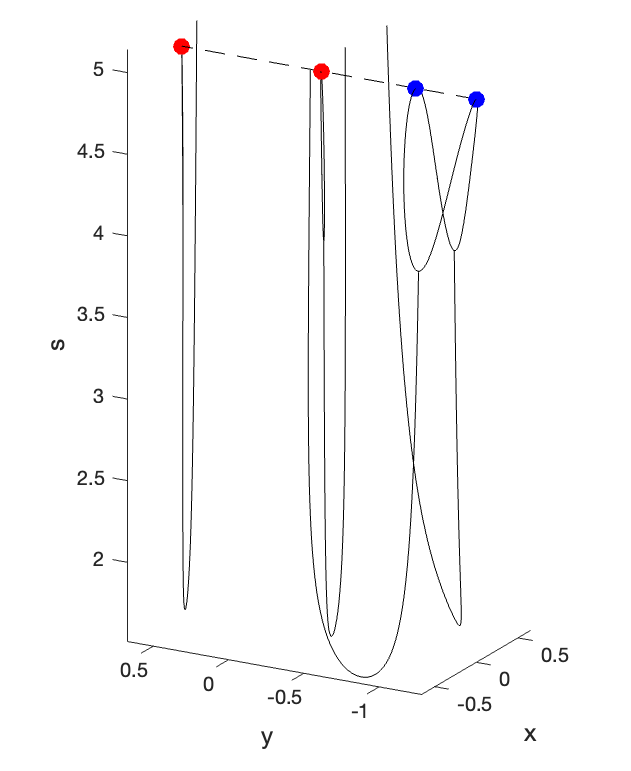} 
        \caption{Non-trivial branch for masses $[0.2,0.2,1,1]$ originating at $s_1 = 4.9$. 
        The smaller masses rotate around each other while the bigger masses oscillate. 
        At the first turning point, a new branch detaches, containing only saddles.}
        \label{fig_5}
    \end{minipage} \hfill
    \begin{minipage}{0.49\textwidth}
        \centering
        \vspace{0.5cm}
        \includegraphics[width=0.73\textwidth]{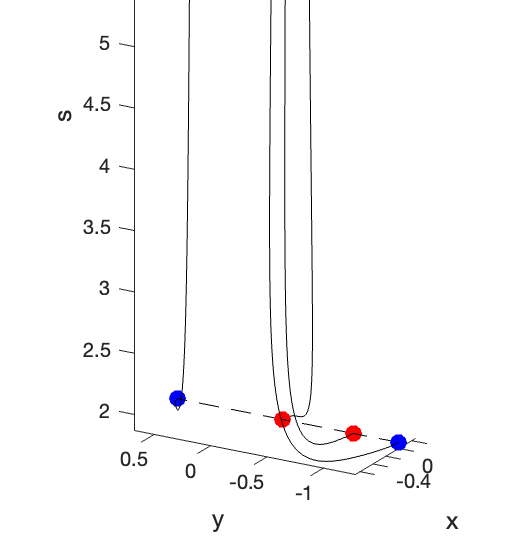} 
        \caption{Non-trivial branch for masses $[0.2,0.2,1,1]$ originating at $s_2 = 1.9$. 
        Along this branch the parameter $s$ decreases, and the configuration tends toward a limit as $s \to +\infty$. 
        All solutions are saddles.}
        \label{fig_6}
    \end{minipage}
\end{figure}

For $\alpha = 0.5$, see Figure~\ref{fig_7}, we observe two bifurcation instants, $s_1 = 4,3$ and $s_2 = 2,2$. Along the non-trivial branch originating from the first bifurcation point, the parameter $s$ initially decreases until a turning point, after which it increases, while the configuration approaches a limit configuration for $s \to + \infty$. These configurations are initially minima of $U|_\s$, but as they reach the turning point, they become saddle points. 
The branch originating from the second bifurcation exhibits a intricate behaviour, with the masses of equal value oscillating around each other. All of these solutions are saddle points.

\begin{figure}[h!]
    \centering
    \includegraphics[width=.35\textwidth]{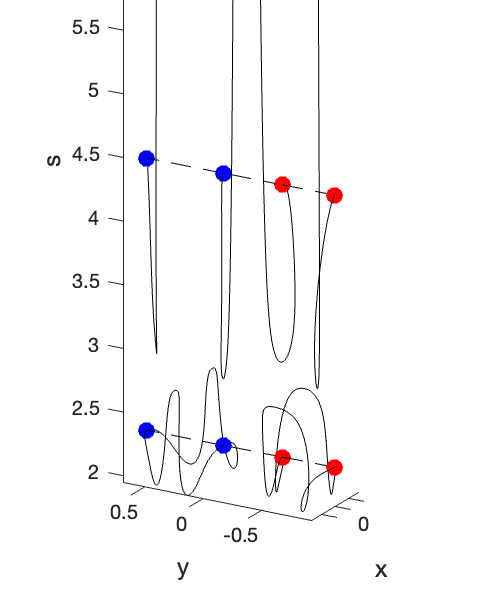}
    \caption{Non-trivial branches for masses $[0.5,0.5,1,1]$. 
    The branch from $s_1 = 4.3$ first consists of minima, but after a turning point the solutions become saddles. 
    The branch from $s_2 = 2.2$ exhibits oscillations between equal masses and consists entirely of saddles.}
    \label{fig_7}
\end{figure}

Finally, for $\alpha = 0.9$ (Figure~\ref{fig_8}), we observe a behaviour similar to those encountered previously. The branch originating from the first bifurcation point follows a similar path to the branch originating from the first bifurcation point in the case of $\alpha = 0.5$. The branch originating from the second bifurcation point, on the other hand, resembles the one originating from the second bifurcation point in the equal masses case.

\begin{figure}[h!]
    \centering
    \includegraphics[width=.35\textwidth]{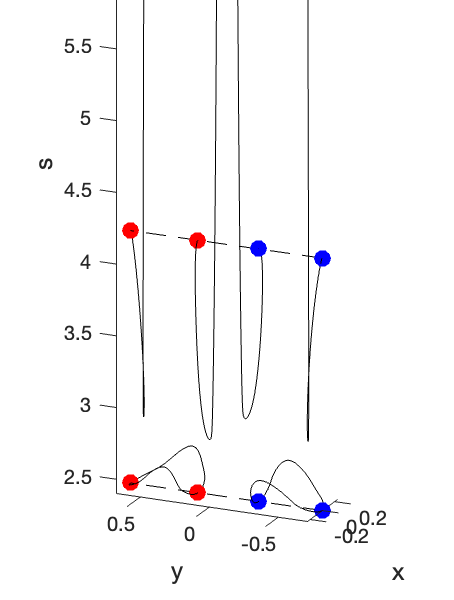}
    \caption{Non-trivial branches for masses $[0.9,0.9,1,1]$. 
    The branch from the first bifurcation behaves like the $\alpha=0.5$ case, with a stability change at a turning point. 
    The branch from the second bifurcation resembles the equal-mass case, consisting entirely of saddles.}
    \label{fig_8}
\end{figure}

These are just a few of the possible collinear configurations for the four-body problem. We have selected them to highlight some of the most interesting behaviours.

\begin{rmk}
We expect to find new branches originating from turning points, especially when the stability nature of the critical points does not change before and after the turning point, since the local homology remains unchanged. We attempted to perturb along a kernel direction near the turning point, as well as along random directions, but were not always successful in identifying these new branches. For the masses $[0.2, \, 0.2, \, 1, \, 1]$, we were able to find one branch detaching from a turning point, even though we expected another one based on homological considerations. 
A possible approach to improve our analysis could involve calculating the Conley index at the critical points, which might provide more precise information about the dynamics near these turning points and help confirm the existence of additional branches. Also various numerical approaches to switching branches at bifurcation are discussed in \cite{Keller1986}.
\end{rmk}


\begin{thebibliography}{10}

\bibitem{Albouy_Chen}
Alain Albouy and Alain Chenciner.
\newblock Le problème des $n$ corps et les distances mutuelles. (french)
  [{The} $n$-body problem and mutual distances].
\newblock {\em Inventiones Mathematicae}, 131(1):151--184, 1998.

\bibitem{Allgower1990}
Eugene~L. Allgower and Kurt Georg.
\newblock {\em Numerical Continuation Methods}.
\newblock Springer Series in Computational Mathematics. Springer Berlin
  Heidelberg, 1990.

\bibitem{AP}
Luca Asselle and Alessandro Portaluri.
\newblock Morse theory for {$S$}-balanced configurations in the {Newtonian}
  $n$-body problem.
\newblock {\em Journal of Dynamics and Differential Equations}, 35:907--946,
  2021.

\bibitem{APF}
Luca Asselle, Alessandro Portaluri, and Marco Fenucci.
\newblock Bifurcation of balanced configuration for the {Newtonian} $n$-body
  problem in $\mathbb{R}^4$.
\newblock {\em Journal of Fixed Point Theory and Applications}, 24(22), 2022.

\bibitem{spectral_flow}
Michael~F. Atiyah, Vijay~K. Patodi, and Isadore~M. Singer.
\newblock Spectral asymmetry and {Riemannian geometry}. {III}.
\newblock {\em Math. Proc. Cambridge Philos. Soc.}, 79:71--99, 1976.

\bibitem{spec_flow_bifurcation}
Patrick~M. Fitzpatrick, Jacobo Pejsachowicz, and Lazaro Recht.
\newblock Spectral flow and bifurcation of critical points of
  strongly-indefinite functionals part {I}. {General} theory.
\newblock {\em J. Funct. Anal.}, 162(1):52--95, 1999.

\bibitem{HuPortaluri2017_Index}
Xijun Hu and Alessandro Portaluri.
\newblock Index theory for heteroclinic orbits of {Hamiltonian} systems.
\newblock {\em Calc. Var. Partial Differential Equations}, 56(6), 2017.

\bibitem{HuPortaluri2019_Bifurcation}
Xijun Hu and Alessandro Portaluri.
\newblock Bifurcation of heteroclinic orbits via an index theory.
\newblock {\em Math. Z.}, 292(1--2):705--723, 2019.

\bibitem{Keller1986}
H.~B. Keller.
\newblock Lectures on numerical methods in bifurcation problems, 1986.

\bibitem{Moeckel1994}
Richard Moeckel.
\newblock Celestial mechanics (especially central configurations).
\newblock Handwritten lecture notes, Trieste, 1994.
\newblock URL: \url{http://www.math.umn.edu/~rmoeckel/notes/Notes.html}.

\bibitem{Moeckel2014}
Richard Moeckel.
\newblock Lectures on central configurations.
\newblock Lecture notes, Centre de Recerca Matemàtica, January 2014.
\newblock URL:
  \url{http://www.math.umn.edu/~rmoeckel/notes/CentralConfigurations.pdf}.

\bibitem{RobbinSalamon1995}
Joel Robbin and Dietmar Salamon.
\newblock The spectral flow and the {Maslov} index.
\newblock {\em Bulletin of the London Mathematical Society}, 27(1):1--33, 1995.

\end{thebibliography}

\end{document}